
%
%
%
%
%
%
%
%
%
%
%
%


\documentclass{amsart}

\title{Non-transversal multilinear duality and joints}

\author{Anthony Carbery and Michael Chi Yung Tang}

\address{School of Mathematics and Maxwell Institute for Mathematical Sciences,
University of Edinburgh, Peter Guthrie Tait Road, Kings Buildings, Edinburgh EH9 3FD}

\email{A.Carbery@ed.ac.uk, Michael.Tang@ed.ac.uk}

\date{2nd January 2022}
\usepackage{amssymb}
\usepackage{amsmath}
\usepackage{physics}
\usepackage{bm}

\usepackage[dvipsnames]{xcolor}

\newtheorem{theorem}{Theorem}

\newtheorem{lemma}{Lemma}
\newtheorem{corollary}{Corollary}

\theoremstyle{remark}

\usepackage{hyperref}
\usepackage{xcolor}
\hypersetup{
  colorlinks   = true, 
  urlcolor     = blue, 
  linkcolor    = blue, 
  citecolor   = red 
}

\usepackage{setspace}
\usepackage{amsmath}
\usepackage{amsfonts}
\usepackage{amssymb}
\usepackage{cases}


\begin{document}

\maketitle


\begin{abstract}
We develop a framework for a duality theory for general multilinear operators which extends that for transversal multilinear operators which has been established in \cite{CHV1}. We apply it to the setting of joints and multijoints, and obtain a ``factorisation" theorem which provides an analogue in the discrete setting of results of Bourgain and Guth (\cite{Guth} and \cite{MR2860188}) from the Euclidean setting. 
\end{abstract}

\section{Introduction}

\noindent
In this note we consider multilinear duality in the context of non-transversality, motivated by the study of joints and multijoints. In what might be called the transversal case, such a theory has been developed in \cite{CHV1} (see also \cite{CHV2} and \cite{CHV3}), and the basic set-up there was as follows.  

\medskip
\noindent
We have a $\sigma$-finite measure space $(X, {\rm d} \mu$), a collection of $d$ normed lattices $\mathcal{Y}_1, \dots , \mathcal{Y}_d$, and for each $j$ a positive linear operator $T_j: \mathcal{Y}_j \to \mathcal{M}(X)$ where $\mathcal{M}(X)$ denotes the space of measurable functions on $X$. Let $\alpha_1, \dots, \alpha_d$ be positive exponents satisfying $\sum_{j=1}^d \alpha_j = 1$. We suppose that we have the ``multilinear" norm bound on the weighted geometric mean $T_1 f_1(x)^{\alpha_1} \dots T_d f_d(x)^{\alpha_d}$ given by
\begin{equation}\label{eq:one}
\| (T_1 f_1)^{\alpha_1} \dots (T_d f_d)^{\alpha_d} \|_{L^q(X)} \leq A  \|f_1\|_{\mathcal{Y}_1}^{\alpha_1} \dots  \|f_d\|_{\mathcal{Y}_d}{^\alpha_d}
\end{equation}
where $ 1 \leq q < \infty$. We also assume that the $T_j$ {\em saturate} $X$, meaning roughly that no subset of $X$ of positive measure remains unreached by all $f_j \in \mathcal{Y}_j$ after the application of $T_j$ (for the formal definition see \cite{CHV1}). The conclusion is that for every nonnegative $M \in L^{q'}(X)$ there are locally integrable functions $g_j$ on $X$ such that 
\begin{equation}\label{eq:two}
M(x) \leq g_1(x)^{\alpha_1} \dots g_d(x)^{\alpha_d} \mbox{ a.e. }
\end{equation}
and for all $j$, for all $f_j \in \mathcal{Y}_j$, we have
\begin{equation}\label{eq:three}
\int_X T_j f_j(x) g_j(x) {\rm d} \mu(x) \leq A \|M\|_{q'} \|f_j\|_{\mathcal{Y}_j}. 
\end{equation}
For reasons set out in \cite{CHV1} this is termed a multilinear duality theorem.
In particular, under the hypotheses described by \eqref{eq:two} and \eqref{eq:three}, \eqref{eq:one} follows by a simple application of H\"older's inequality. The scope of this theorem includes many inequalities arising in multilinear harmonic analysis, especially those which have a transversal geometric set-up. For this reason we refer to this theory as the transversal multilinear duality theory; in the present context this means precisely that we are dealing with a pointwise product of powers of {\em several positive linear operators} $T_j : \mathcal{Y}_j \to \mathcal{M}(X)$. 

\medskip
\noindent
There are, however, many other examples of multilinear geometrical inequalities in harmonic analysis which do not exhibit this transversality property, among them the general endpoint multilinear Kakeya theorem of Bourgain and Guth, \cite{MR2860188}, and the multijoints estimates of Zhang \cite{MR4118611}. These are better modelled in the abstract setting by a {\em single positive multilinear operator} $T$ defined on the product $\mathcal{Y}_1 \times \dots \times \mathcal{Y}_d$. There is no longer any clear role for a collection of possibly different exponents $\alpha_j$, and it seems natural to assign the common value $1/d$ in place of them. In any case, the natural starting point of the non-transversal theory is that of a positive multilinear $T : \mathcal{Y}_1 \times \dots \times \mathcal{Y}_d \to \mathcal{M}(X)$ which saturates $X$ and to assume that we have, for some $q \geq 1$,
\begin{equation}\label{eq:four}
\| T(f_1, \dots, f_d)^{1/d}\|_{L^q(X)} \leq A  \left(\|f_1\|_{\mathcal{Y}_1}\dots  \|f_d\|_{\mathcal{Y}_d}\right)^{1/d}
\end{equation}
in analogy with \eqref{eq:one}.
Can we make conclusions analogous to \eqref{eq:two} and \eqref{eq:three}? It is not perhaps immediately clear what the nature of such conclusions may be, but the following describes one potential set of conclusions which has proved useful in practice.

\medskip
\noindent
{\bf Potential conclusion.} For every $M \in L^{q'}(X)$ of norm $1$, there exist positive linear operators $R_j: \mathcal{Y}_j \to L^1(X)$ such that for every nonnegative  $f_j \in \mathcal{Y}_j$
\begin{equation}\label{eq:five}
M(x)^dT(f_1, \dots, f_d)(x) \leq R_1f_1(x) \dots R_d f_d(x) \mbox{ a.e.,}
\end{equation}
and
\begin{equation}\label{eq:six}
\|R_j\|_{\mathcal{Y}_j \to L^1(X)} \leq A.
\end{equation}
By an even more transparent application of H\"older's inequality, the hypotheses described by \eqref{eq:five} and \eqref{eq:six} readily yield \eqref{eq:four}. Moreover, in the setting where $T(f_1, \dots , f_d)(x)$ happens to be of the form $T_1 f_1(x) \dots T_d f_d(x)$, the potential conclusion coincides with the conclusion described by \eqref{eq:two} and \eqref{eq:three}, where for each fixed $M$ with $\|M\|_{q'} =1$, $R_j$ and $g_j$ are related by
$$ R_j f(x) = g_j(x) T_j f(x).$$
It therefore seems reasonable to hope that under the hypothesis \eqref{eq:four}, we may expect to deduce the conclusion described by \eqref{eq:five} and \eqref{eq:six}.

\medskip
\noindent
Unfortunately this is not the case, as was demonstrated in \cite[Proposition 8.1]{CHV1}, using a concrete  example.

\medskip
\noindent
\subsection{The main result} In this note we demonstrate that in principle one may indeed recover the potential conclusion posited above, under an auxiliary structural hypothesis, at the expense of a larger constant. We then go on to verify the auxiliary hypothesis in a case of current interest in harmonic analysis and discrete geometry, the joints and multijoints estimates of Zhang \cite{MR4118611}. 

\medskip
\noindent
We now state the auxiliary structural hypothesis we impose.

\medskip
\noindent
{\bf Structural hypothesis.} For every nonnegative $(f_j)$ in some dense subspace of $\mathcal{Y}_1 \times \dots \times \mathcal{Y}_d$, there are positive linear operators $S_j : \mathcal{Y}_j \to \mathcal{M}(X)$ such that for all nonnegative $h_j \in \mathcal{Y}_j$,
\begin{equation}\label{eq:seven}
T(h_1, \dots, h_d)(x) \leq S_1h_1(x) \dots S_d h_d(x) \mbox{ a.e.,}
\end{equation}
and 
\begin{equation}\label{eq:eight}
S_1 f_1(x) \dots S_d f_d(x) \leq B^d T(f_1, \dots , f_d)(x) \mbox{ a.e.}\footnote{We thank Timo H\"anninen for pointing out that it is really only an integrated version of this condition that we use.}
\end{equation}
Note that this auxiliary hypothesis is automatically verified in the transversal case with $B = 1$ and $S_j$ indepedent of $(f_j)$. In general, the operators $S_j$ in this structural hypothesis are permitted to depend on the particular inputs $f_j$.

\begin{theorem}\label{thm:main}
Suppose that $(X, {\rm d} \mu)$ is a $\sigma$-finite measure space and $\mathcal{Y}_j$ are normed lattices. Let $T : \mathcal{Y}_1 \times \dots \times \mathcal{Y}_d \to \mathcal{M}(X)$ be a positive multilinear operator which saturates\footnote{i.e. for each subset $E \subseteq X$ of positive measure, there exists a subset $E' \subseteq E$ of positive measure and a nonnegative $(h_1, \dots, h_d) \in Y_1 \times \dots \times Y_d$ such that $T(h_1, \dots , h_d) > 0$ a.e. on $E'$.} $X$. Assume that we have, for some $q \geq 1$,
\begin{equation*}
\| T(f_1, \dots, f_d)^{1/d}\|_{L^q(X)} \leq A  \left(\|f_1\|_{\mathcal{Y}_1}\dots  \|f_d\|_{\mathcal{Y}_d}\right)^{1/d}.
\end{equation*}
Assume moreover that $T$ satisfies the auxiliary structural hypothesis given by \eqref{eq:seven} and \eqref{eq:eight}. Then for every nonnegative $M \in L^{q'}(X)$ with $\|M\|_{q'} =1$ there exist positive linear operators $R_j: \mathcal{Y}_j \to L^1(X)$ such that for every nonnegative  $f_j \in \mathcal{Y}_j$
\begin{equation*}
M(x)^dT(f_1, \dots, f_d)(x) \leq R_1f_1(x) \dots R_d f_d(x) \mbox{ a.e.,}
\end{equation*}
and
\begin{equation*}
\|R_j\|_{\mathcal{Y}_j \to L^1(X)} \leq AB.
\end{equation*}
\end{theorem}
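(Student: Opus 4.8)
The plan is to use the structural hypothesis to reduce the non-transversal estimate to an application of the existing transversal multilinear duality theorem, and then to patch together the outputs obtained over a dense family of inputs. First I would fix a nonnegative $M \in L^{q'}(X)$ with $\|M\|_{q'}=1$, and fix a nonnegative $(f_1,\dots,f_d)$ in the dense subspace supplied by the structural hypothesis. Applying \eqref{eq:seven} and \eqref{eq:eight}, I obtain positive linear operators $S_j : \mathcal{Y}_j \to \mathcal{M}(X)$ (depending on this particular $(f_j)$) which sandwich $T$: for all nonnegative $h_j \in \mathcal{Y}_j$ we have $T(h_1,\dots,h_d) \leq S_1h_1\cdots S_dh_d$ a.e., and $S_1f_1\cdots S_df_d \leq B^d\, T(f_1,\dots,f_d)$ a.e. Combining the upper sandwich bound with the hypothesis \eqref{eq:four} gives
\begin{equation*}
\|(S_1h_1)^{1/d}\cdots(S_dh_d)^{1/d}\|_{L^q(X)} \leq A\left(\|h_1\|_{\mathcal{Y}_1}\cdots\|h_d\|_{\mathcal{Y}_d}\right)^{1/d}
\end{equation*}
for all nonnegative $h_j$; that is, the transversal hypothesis \eqref{eq:one} holds for the linear operators $S_j$ with exponents $\alpha_j = 1/d$ and the same constant $A$. (One should check the $S_j$ inherit saturation of $X$ from $T$; since $T(h)\le S_1h_1\cdots S_dh_d$, wherever $T(h)>0$ on a positive measure set each $S_jh_j>0$ there, so saturation transfers.)

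Next I would invoke the transversal multilinear duality theorem (the result recalled around \eqref{eq:two}–\eqref{eq:three}) applied to the operators $S_j$ and to this same $M$: there exist locally integrable nonnegative $g_j$ on $X$ with $M(x) \leq g_1(x)^{1/d}\cdots g_d(x)^{1/d}$ a.e. and $\int_X S_jh_j\, g_j\, {\rm d}\mu \leq A\|h_j\|_{\mathcal{Y}_j}$ for all $j$ and all $h_j \in \mathcal{Y}_j$. Now define the candidate operators by $R_jh_j(x) := B\, g_j(x)\, S_jh_j(x)$. These are positive and linear in $h_j$, and the norm bound \eqref{eq:six} with constant $AB$ follows immediately from $\int_X R_jh_j\,{\rm d}\mu = B\int_X S_jh_j\,g_j\,{\rm d}\mu \leq AB\|h_j\|_{\mathcal{Y}_j}$. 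For the pointwise bound, evaluate at the distinguished inputs $f_j$: using $M^d \le g_1\cdots g_d$ and then the lower sandwich bound \eqref{eq:eight},
\begin{equation*}
M(x)^d\, T(f_1,\dots,f_d)(x) \leq g_1(x)\cdots g_d(x)\, T(f_1,\dots,f_d)(x) \leq \frac{g_1(x)\cdots g_d(x)}{B^d}\, S_1f_1(x)\cdots S_df_d(x) = R_1f_1(x)\cdots R_df_d(x)
\end{equation*}
a.e., as desired. So far this produces, for each $(f_j)$ in the dense subspace, operators $R_j^{(f)}$ satisfying \eqref{eq:five} at that point and \eqref{eq:six} globally.

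The main obstacle — and the step needing the most care — is removing the dependence of the $R_j$ on the particular input $(f_j)$, i.e. passing from a family of operators each working at one point to a single family working simultaneously at all nonnegative $f_j \in \mathcal{Y}_j$. The natural approach is an exhaustion/compactness argument: the uniform bound \eqref{eq:six} confines all the $R_j^{(f)}$ to a bounded subset of the space of positive operators $\mathcal{Y}_j \to L^1(X)$, and one extracts a limiting $R_j$ along a suitable (weak-$*$ or pointwise, after passing to a subsequence using $\sigma$-finiteness) convergent net or sequence over an exhausting sequence of inputs, arranging that the limit inherits the pointwise inequality \eqref{eq:five} at every point of the dense subspace and hence, by density and continuity of the $R_j$ in $\mathcal{Y}_j$ together with the lattice structure, at all nonnegative $f_j \in \mathcal{Y}_j$. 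Care is needed that the pointwise a.e. inequalities survive the limit — this is where the $\sigma$-finiteness of $(X,{\rm d}\mu)$ and a diagonal argument over a countable exhausting family enter — and that positivity and the $L^1$ bound are preserved. Alternatively, one may circumvent some of this by directly proving an integrated/weak form of the conclusion and then upgrading it, exploiting H\"anninen's observation (noted in the footnote) that only an integrated version of \eqref{eq:eight} is actually used; I would present whichever route keeps the limiting argument cleanest.
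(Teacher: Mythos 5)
Your first step misreads the direction of the sandwich in the structural hypothesis. Condition \eqref{eq:seven} says $T(h_1,\dots,h_d)\le S_1h_1\cdots S_dh_d$ for all $h$, i.e.\ it bounds the product of the $S_j$'s \emph{from below} by $T$, while the reverse bound \eqref{eq:eight}, $S_1f_1\cdots S_df_d\le B^dT(f_1,\dots,f_d)$, is available only at the one distinguished input $(f_j)$ that generated the $S_j$. Consequently, from \eqref{eq:four} you can only conclude $\|T(h)^{1/d}\|_{L^q}\le\|(S_1h_1\cdots S_dh_d)^{1/d}\|_{L^q}$; you cannot deduce $\|(S_1h_1)^{1/d}\cdots(S_dh_d)^{1/d}\|_{L^q}\le A\left(\|h_1\|_{\mathcal{Y}_1}\cdots\|h_d\|_{\mathcal{Y}_d}\right)^{1/d}$ for all $h_j$, since that would require $S_1h_1\cdots S_dh_d\lesssim T(h_1,\dots,h_d)$ at \emph{all} inputs, which the hypothesis deliberately does not supply. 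So the hypothesis \eqref{eq:one} of the transversal duality theorem of \cite{CHV1} is not verified for the $S_j$ (you only get $\|(S_1f_1\cdots S_df_d)^{1/d}\|_{L^q}\le AB\prod_j\|f_j\|_{\mathcal{Y}_j}^{1/d}$ at the single input $(f_j)$), and the reduction to the transversal theorem collapses at its first line. Indeed, if a single family $S_j$ satisfied the multilinear bound at all inputs, the ``potential conclusion'' would follow without any of the present machinery, contradicting the counterexample in \cite[Proposition 8.1]{CHV1} that motivates this paper.

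Even granting that step, the final limiting argument does not do what you need. Each family $R_j^{(f)}$ satisfies the pointwise inequality \eqref{eq:five} only at its own generating input $f$; a weak-$*$ limit (or limit along a subnet, or diagonal limit) of $\{R_j^{(f^{(n)})}\}$ over a dense sequence $\{f^{(n)}\}$ inherits only those inequalities valid for all large $n$ along the subnet, and the inequality at a fixed $f^{(m)}$ holds for the single index $n=m$ only. Nothing in your sketch even yields the finite-intersection property for the convex sets of admissible $(R_j)$ attached to finitely many inputs, and that simultaneity is precisely the crux of the theorem. The paper does not take a limit at all: in the finite discrete case it works with the convex constraint set $\mathcal{C}=\{(g_1,\dots,g_d)\ge0\,:\,K(x,y)\le g_1(x,y_1)\cdots g_d(x,y_d)\}$ and the convex set $\mathcal{F}=\{(f_j)\ge0\,:\,\sum_j\|f_j\|_{\mathcal{Y}_j}\le1\}$, applies a minimax theorem to interchange $\min_{g\in\mathcal{C}}\sup_{f\in\mathcal{F}}$ with $\sup_{f}\inf_{g}$, and then for each fixed $(f_j)$ uses the scaling invariance of the constraints, the arithmetic--geometric mean inequality and (the integrated form of) \eqref{eq:eight}, followed by \eqref{eq:four}, to bound $\inf_{g\in\mathcal{C}}\sum_j\sum_{x,y_j}g_j(x,y_j)f_j(y_j)$ by $Bd\,\|T(f_1,\dots,f_d)^{1/d}\|_{1}\le AB$. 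If you wish to keep your per-input viewpoint, the passage from ``for each $f$ there exist good $R_j$'' to ``there exist $R_j$ good for all $f$'' must be carried out by such a minimax (or Hahn--Banach separation) argument, not by extracting a limit; as written the proof does not go through.
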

\noindent
In contrast with the transversal theory, Theorem~\ref{thm:main} has content even in the case that the measure space $X$ is a singleton (meaning essentially that we are dealing with multilinear forms rather than multilinear operators). 

\subsection{The symmetric case} If our operator $T$ is symmetric in its arguments (and in particular this requires $\mathcal{Y}_j = \mathcal{Y}$ for all $j$), we can choose the operators $R_j:\mathcal{Y} \to L^1(X)$ in the conclusion of Theorem~\ref{thm:main} to all coincide: if $\rho_j(x,y)$ is the kernel of $R_j$ then we can take the kernel of $R$ to be $\prod_{j=1}^d \rho_j(x,y)^{1/d}$. Moreover, for similar reasons, the auxiliary structural hypothesis in the 
case that $T$ is symmetric is stronger than the following symmetric version:

\medskip
\noindent
{\bf Structural hypothesis (symmetric version).} For every nonnegative $f$ in some dense subspace of $ \mathcal{Y}$, there is a positive linear operator $S : \mathcal{Y} \to \mathcal{M}(X)$ such that for all nonnegative $h_j \in \mathcal{Y}$,
\begin{equation}\label{eq:seven_sym}
T(h_1, \dots, h_d)(x) \leq Sh_1(x) \dots S h_d(x) \mbox{ a.e.,}
\end{equation}
and 
\begin{equation}\label{eq:eight_sym}
Sf(x)^d \leq B^d T(f, \dots , f)(x) \mbox{ a.e.}
\end{equation}
Under this weaker structural hypothesis, and under only the {\em diagonal} version of the main hypothesis \eqref{eq:four}, we can still obtain the conclusion of Theorem~\ref{thm:main} with all the $R_j$ coincident:

\begin{theorem}\label{thm:main_sym}
Suppose that $(X, {\rm d} \mu)$ is a $\sigma$-finite measure space and $\mathcal{Y}$ is a normed lattice. Let $T : \mathcal{Y}^d\to \mathcal{M}(X)$ be a symmetric positive multilinear operator which saturates $X$. Assume that we have, for some $q \geq 1$,
\begin{equation*}
\| T(f, \dots, f)^{1/d}\|_{L^q(X)} \leq A  \|f\|_{\mathcal{Y}}.
\end{equation*}
Assume moreover that $T$ satisfies the auxiliary structural hypothesis given by \eqref{eq:seven_sym} and \eqref{eq:eight_sym}. Then for every nonnegative $M \in L^{q'}(X)$ with $\|M\|_{q'} =1$ there exists a positive linear operator $R : \mathcal{Y} \to L^1(X)$ such that for all nonnegative  $f_j \in \mathcal{Y}$
\begin{equation*}
M(x)^dT(f_1, \dots, f_d)(x) \leq Rf_1(x) \dots Rf_d(x) \mbox{ a.e.,}
\end{equation*}
and
\begin{equation*}
\|R\|_{\mathcal{Y} \to L^1(X)} \leq AB.
\end{equation*}
\end{theorem}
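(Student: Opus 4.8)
The plan is to derive Theorem~\ref{thm:main_sym} from Theorem~\ref{thm:main} by a symmetrisation argument, rather than re-running the proof of the main theorem from scratch. First I would address the apparent gap in hypotheses: Theorem~\ref{thm:main} needs the multilinear bound \eqref{eq:four} for all $d$-tuples $(f_1,\dots,f_d)$, whereas here we are only given the diagonal bound $\|T(f,\dots,f)^{1/d}\|_{L^q(X)} \le A\|f\|_{\mathcal Y}$. The point is that for a \emph{symmetric positive multilinear} operator $T$ one can recover the off-diagonal bound from the diagonal one. Indeed, by positivity and symmetry, $T(f_1,\dots,f_d)(x) \le \prod_{j} T(f_j,\dots,f_j)(x)^{1/d}$ pointwise a.e. — this is exactly the content of the structural hypothesis \eqref{eq:seven_sym}--\eqref{eq:eight_sym} applied cleverly, but in fact it follows more directly: writing $f_1,\dots,f_d$ and using multilinearity to expand, the AM--GM / Maclaurin-type inequality for symmetric positive multilinear forms gives $T(f_1,\dots,f_d) \le \big(\prod_j T(f_j,\dots,f_j)\big)^{1/d}$. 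Taking the $1/d$ power and then the $L^q$ norm, and applying H\"older with exponents $d,\dots,d$, yields
\begin{equation*}
\|T(f_1,\dots,f_d)^{1/d}\|_{L^q(X)} \le \prod_{j=1}^d \|T(f_j,\dots,f_j)^{1/d}\|_{L^q(X)}^{1/d} \le A\big(\|f_1\|_{\mathcal Y}\cdots\|f_d\|_{\mathcal Y}\big)^{1/d},
\end{equation*}
so the full hypothesis \eqref{eq:four} holds.

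Second, I would upgrade the symmetric structural hypothesis \eqref{eq:seven_sym}--\eqref{eq:eight_sym} to the full structural hypothesis \eqref{eq:seven}--\eqref{eq:eight}. Given nonnegative $(f_1,\dots,f_d)$ in the dense subspace, I want operators $S_1,\dots,S_d$. The natural choice is to apply the symmetric hypothesis to each $f_j$ individually, obtaining positive linear operators $S^{(j)} := S_{f_j}$ with $T(h_1,\dots,h_d) \le \prod_i S^{(i)}h_i$ (using \eqref{eq:seven_sym} for each factor and positivity) — wait, more carefully: \eqref{eq:seven_sym} gives one operator $S$ attached to a single input $f$, and $T(h_1,\dots,h_d) \le \prod S h_i$. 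To get the dependence on the tuple $(f_j)$ right, set $S_j := S^{(j)}$ where $S^{(j)}$ is the operator furnished by the symmetric hypothesis with the choice $f = f_j$. Then \eqref{eq:seven} is immediate, and for \eqref{eq:eight} we compute $\prod_j S_j f_j(x) = \prod_j S^{(j)} f_j(x) \le \prod_j \big(B^d T(f_j,\dots,f_j)(x)\big)^{1/d} \le B^d T(f_1,\dots,f_d)(x)$, where the first inequality is \eqref{eq:eight_sym} applied to each $f_j$ and the second is the symmetric multilinear AM--GM inequality above (run in the reverse direction — this needs $T(f_1,\dots,f_d) \ge \prod_j T(f_j,\dots,f_j)^{1/d}$, which is \emph{false} in general). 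So this direct approach stumbles: the inequality \eqref{eq:eight} points the opposite way to what symmetric AM--GM gives.

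The fix, and the genuinely new ingredient, is to bypass \eqref{eq:eight} at the level of operators and instead invoke the footnoted remark that only an \emph{integrated} version of \eqref{eq:eight} is actually used in the proof of Theorem~\ref{thm:main}. Concretely, what the argument behind Theorem~\ref{thm:main} needs is a bound of the form $\int_X \prod_j S_j f_j \,{\rm d}\mu \le B^d \int_X T(f_1,\dots,f_d)\,{\rm d}\mu$ — or, even more precisely, a bound controlling $\int_X M^{-d}\prod_j R_j f_j$ type quantities, for the \emph{fixed} tuple $(f_j)$ on the dense subspace. Here symmetry lets us deploy H\"older differently: for the fixed tuple, $\int_X \prod_j S^{(j)}f_j\,{\rm d}\mu \le \prod_j \big(\int_X (S^{(j)}f_j)^d\,{\rm d}\mu\big)^{1/d} \le \prod_j \big(\int_X B^d T(f_j,\dots,f_j)\,{\rm d}\mu\big)^{1/d}$, and now the \emph{diagonal} main hypothesis \eqref{eq:four} (with $q=1$, or via the saturation-plus-duality machinery for general $q$) controls each $\int_X T(f_j,\dots,f_j)$. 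I would therefore re-run the proof of Theorem~\ref{thm:main} — which I am entitled to use — tracking where \eqref{eq:eight} enters and substituting this integrated symmetric estimate, and simultaneously carry the symmetry of $T$ through the construction of the $R_j$ so that, as indicated in the paragraph preceding the theorem, one may replace the tuple $(R_1,\dots,R_d)$ by the single operator $R$ with kernel $\prod_j \rho_j(x,y)^{1/d}$; symmetry of $T$ guarantees that \eqref{eq:five} is preserved under this replacement and that $\|R\|_{\mathcal Y \to L^1} \le \prod_j \|R_j\|_{\mathcal Y \to L^1}^{1/d} \le AB$ by H\"older again.

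The main obstacle, then, is the bookkeeping in the previous paragraph: isolating exactly which consequence of \eqref{eq:eight} the proof of Theorem~\ref{thm:main} consumes, and checking that the weaker integrated symmetric substitute genuinely suffices at that point (in particular that no pointwise-a.e.\ use of \eqref{eq:eight} is hidden in passing from the dense subspace to all of $\mathcal Y$, and that the constant does not degrade beyond $AB$). A cleaner alternative, which I would pursue if the re-running proves delicate, is to first prove Theorem~\ref{thm:main_sym} as a formally weaker statement using only the symmetric structural hypothesis directly in the construction — building $R$ on the dense subspace via the symmetric hypothesis and then extending by the saturation argument exactly as in Theorem~\ref{thm:main} — so that the two theorems share a common proof skeleton with the symmetric case simply substituting H\"older-across-$d$-copies-of-one-operator for H\"older-across-$d$-different-operators at every step. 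Either way, no fundamentally new analysis beyond Theorem~\ref{thm:main} is required; the content is that symmetry is exactly the feature that lets the diagonal hypotheses do the work of the full ones.
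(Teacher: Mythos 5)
Your proposal hinges on deriving Theorem~\ref{thm:main_sym} from Theorem~\ref{thm:main}, and both reductions you need are false. First, the pointwise inequality $T(f_1,\dots,f_d)\leq \prod_j T(f_j,\dots,f_j)^{1/d}$, which you use to recover the off-diagonal bound \eqref{eq:four} with the same constant $A$, fails for general symmetric positive multilinear $T$: take $d=2$, $X$ a singleton, and $T$ the bilinear form with matrix having $1$ in the $(i,j)$ and $(j,i)$ slots ($i\neq j$) and $0$ elsewhere (the form $\tilde T$ in the paper's own footnote); then $T(e_i,e_j)=1$ while $T(e_i,e_i)=T(e_j,e_j)=0$. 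Such a Cauchy--Schwarz/Maclaurin inequality requires positive semidefiniteness, not mere positivity of coefficients. The scaling argument in the paper's footnote does yield \eqref{eq:four}, but only with constant $dA$, which would already degrade your final bound to $dAB$ rather than $AB$ --- and the paper explicitly records that it is unclear whether Theorem~\ref{thm:main} implies Theorem~\ref{thm:main_sym} even allowing $d$-dependent losses. Second, your claim that \eqref{eq:seven} ``is immediate'' for the choice $S_j:=S^{(j)}$ is also wrong: \eqref{eq:seven_sym} gives $T(h_1,\dots,h_d)\leq S^{(j)}h_1\cdots S^{(j)}h_d$ with the \emph{same} operator in every slot, and this does not imply the mixed-slot bound $T(h_1,\dots,h_d)\leq S^{(1)}h_1\cdots S^{(d)}h_d$ (already for $d=2$, $S^{(1)}h_1$ and $S^{(2)}h_2$ can both be small while $S^{(1)}h_2$ and $S^{(2)}h_1$ are large, so each unmixed constraint holds but the mixed product drops below $T(h_1,h_2)$). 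So neither hypothesis of Theorem~\ref{thm:main} is supplied by the symmetric assumptions. Your proposed repair --- ``re-run the proof of Theorem~\ref{thm:main} and substitute an integrated symmetric estimate'' --- is precisely the missing content, not a fix: the step in that proof which consumes \eqref{eq:eight} requires, for a tuple $(S_1,\dots,S_d)$ that is \emph{admissible} in the mixed-slot sense, a bound of the form $\prod_j S_jf_j(x)\leq B^dT(f_1,\dots,f_d)(x)$ (or its integral in $x$ after taking $d$-th roots), and admissibility is exactly what you cannot produce; moreover your chain asserts that the diagonal hypothesis with $q=1$ controls $\int_X T(f_j,\dots,f_j)\,{\rm d}\mu$, whereas it controls $\int_X T(f_j,\dots,f_j)^{1/d}\,{\rm d}\mu$.

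For comparison, the paper does not pass through Theorem~\ref{thm:main} at all: it runs the minimax/duality argument directly on the symmetric constraint class $\mathcal{C}=\{g\geq 0: K(x,y_1,\dots,y_d)\leq g(x,y_1)\cdots g(x,y_d)\}$. After exchanging $\inf$ and $\sup$, the test object is a \emph{single} $f$ in the unit ball of $\mathcal{Y}_+$, and the only inputs needed are the diagonal structural bound \eqref{eq:eight_sym} (applied at that $f$, pointwise in $x$) together with the diagonal norm hypothesis --- which is exactly why the weaker symmetric hypotheses suffice and why no off-diagonal information is ever required. Your closing ``cleaner alternative'' gestures in this direction, but it is left entirely as a sketch; making that direct argument precise (constraint class, minimax exchange, pointwise-in-$x$ reduction, then \eqref{eq:eight_sym} and the diagonal hypothesis) is the proof, and it is the route to take rather than symmetrisation from Theorem~\ref{thm:main}.
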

\noindent
It is not clear whether Theorem~\ref{thm:main} also implies Theorem~\ref{thm:main_sym}, even if we are willing to lose constants depending on the degree of multilinearity $d$. While the diagonal condition 
\begin{equation*}
\| T(f, \dots, f)^{1/d}\|_{L^q(X)} \leq A  \|f\|_{\mathcal{Y}}
\end{equation*}
readily implies the off-diagonal condition\footnote{Indeed, assume that for all $f$ we have
\begin{equation*}
\| T(f, \dots, f)^{1/d}\|_{L^q(X)} \leq A  \|f\|_{\mathcal{Y}}.
\end{equation*}
Given $(f_j)$, let $\lambda_j>0$ be such that $\prod_j \lambda_j =1$ and consider $f := \sum_j \lambda_j f_j$. Then
$$\| T(f_1, \dots, f_d)^{1/d}\|_{L^q(X)} = \| T(\lambda_1f_1, \dots, \lambda_d f_d)^{1/d}\|_{L^q(X)} \leq \| T(f, \dots, f)^{1/d}\|_{L^q(X)}$$
$$ \leq A \|f\|_\mathcal{Y} \leq A \sum_j \lambda_j \|f_j\|_\mathcal{Y}.$$
Therefore, by the arithmetic-geometric mean inequality,
$$\| T(f_1, \dots, f_d)^{1/d}\|_{L^q(X)} \leq  A \inf_{\prod_j \lambda_j =1}\sum_j \lambda_j \|f_j\|_\mathcal{Y} =
dA  \left(\|f_1\|_{\mathcal{Y}}\dots  \|f_d\|_{\mathcal{Y}}\right)^{1/d}.$$
At least when $d=2$ this numerology is sharp. Consider the discrete setting in which $X$ is a singleton and the bilinear form $T$ is given by a matrix. If $T$ has constant $1$ in the off-diagonal case, then some entry $(i,j)$ of the matrix $A$ corresponding to $T$ is $1$. Let $\tilde{T}$ be the bilinear form with matrix with entries $1$ in the $(i,j)$ and $(j,i)$ positions, and zero entries elsewhere. Then the diagonal constant for $T$ will be at least as large as it is for $\tilde{T}$, and a direct calculation shows that for $\tilde{T}$ it is exactly $1/2$ when $i \neq j$ and $1$ when $i=j$.}
\begin{equation*}
\| T(f_1, \dots, f_d)^{1/d}\|_{L^q(X)} \leq dA  \left(\|f_1\|_{\mathcal{Y}_1}\dots  \|f_d\|_{\mathcal{Y}_d}\right)^{1/d},
\end{equation*}
it is less clear that the diagonal auxiliary conditions \eqref{eq:seven_sym} and \eqref{eq:eight_sym} imply their off-diagonal counterparts \eqref{eq:seven} and \eqref{eq:eight}.

\medskip
\noindent
It is also not clear to what extent the structural conditions might be necessary in order for the conclusions of  Theorems~\ref{thm:main} and \ref{thm:main_sym} to hold.



\medskip
\noindent
We prove Theorems~\ref{thm:main} and \ref{thm:main_sym} in Section~\ref{sect:two} below in the special case of finite discrete measure spaces $X$,
$Y$ and $Y_j$ (over which the lattices $\mathcal{Y}$ and $\mathcal{Y}_j$ are defined). The details of the arguments for the general cases will appear elsewhere.

\medskip
\noindent
\subsection{Joints and multijoints}
Joints and multijoints problems can be regarded as discrete analogues of the Kakeya and multilinear Kakeya problems on Euclidean spaces. Let $\mathbb{F}$ be a field and let $\mathcal{L}$ be a family of lines in $\mathbb{F}^d$. 
A {\bf joint} of $\mathcal{L}$ is a meeting point in $\mathbb{F}^d$ of $d$ lines in $\mathcal{L}$ which have linearly independent directions. If we have $d$ families of lines $\mathcal{L}_1, \dots , \mathcal{L}_d$ in $\mathbb{F}^d$, a {\bf multijoint} is a joint for $\mathcal{L}_1 \cup \dots \cup \mathcal{L}_d$
{\em with the additional restriction that exactly one line forming the joint comes from each family $\mathcal{L}_j$.} We denote by $J$ the set of joints or multijoints formed by a family or families of lines, according to context.

\medskip
\noindent
For the joints problem we define 
$$ N(x) : = \#\{(l_1, \dots , l_d) \in \mathcal{L}\, : \, l_1, \dots, l_d \mbox{ form a joint at }x\}$$
and for the multijoints problem
$$ N(x) : = \#\{(l_1, \dots , l_d) \in \mathcal{L}_1 \times \dots \times \mathcal{L}_d\, : \, l_1, \dots, l_d \mbox{ form a joint at }x\}.$$
We allow repetitions in the families $\mathcal{L}$ and $\mathcal{L}_j$, and our definition of $N(x)$, as well as the cardinalities $|\mathcal{L}|$ and $|\mathcal{L}_j|$, are understood to count such repetitions.

\medskip
\noindent
R.~Zhang \cite{MR4118611} has established the following sharp estimates:

\begin{theorem}\label{thm:zhang}
For the joints problem we have
$$ \sum_{x \in J} N(x)^{1/(d-1)} \lesssim  |\mathcal{L}|^{d/(d-1)}$$
and for the multijoints problem we have
$$ \sum_{x \in J} N(x)^{1/(d-1)} \lesssim \left(|\mathcal{L}_1| \dots |\mathcal{L}_d|\right)^{1/(d-1)}$$
where the implicit constants depend only on $d$.
\end{theorem}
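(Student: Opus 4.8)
The final stated result is Zhang's Theorem \ref{thm:zhang}, which is quoted from the literature rather than proved in this paper. I will sketch how one would prove it using the machinery developed in the present excerpt (Theorems \ref{thm:main} and \ref{thm:main_sym}) together with the known polynomial-method input for the Kakeya/joints setting, as this is presumably the route the authors will take in their factorisation application.

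\medskip

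The plan is to reformulate the multijoints counting problem as an instance of the non-transversal multilinear duality framework and then feed it Zhang's polynomial-method estimate as the ``structural hypothesis'' ingredient. Set $X = J$ (a finite set, with counting measure), let $\mathcal{Y}_j = \ell^1(\mathcal{L}_j)$, and define the positive multilinear form $T(f_1,\dots,f_d)(x)$ to be $\sum f_1(l_1)\cdots f_d(l_d)$ where the sum runs over tuples $(l_1,\dots,l_d)$ forming a joint at $x$. Then $T(\mathbf{1},\dots,\mathbf{1})(x) = N(x)$, and the target estimate $\sum_{x\in J} N(x)^{1/(d-1)} \lesssim \prod_j |\mathcal{L}_j|^{1/(d-1)}$ is precisely an inequality of the shape \eqref{eq:four}, after raising to the power $(d-1)/d$ one identifies the exponent $q$ via $q/d = 1/(d-1)$, i.e. $q = d/(d-1)$, so that $q' = d$. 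The weight exponent $d$ appearing in \eqref{eq:five} is then exactly matched to $q'=d$, which is the numerological miracle that makes the joints problem fit this framework.

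\medskip

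First I would verify the structural hypothesis \eqref{eq:seven}–\eqref{eq:eight}. Given nonnegative weights $(f_j)$, Zhang's polynomial argument produces, for each family $\mathcal{L}_j$, a choice of polynomial vanishing on a suitable weighted subset of the joints, whose gradient controls the relevant line-incidence counts; concretely this yields positive linear operators $S_j : \ell^1(\mathcal{L}_j) \to \mathcal{M}(J)$ — essentially $S_j h(x)$ is a sum over lines $l\in\mathcal{L}_j$ through $x$ of $h(l)$ times a geometric weight coming from the polynomial's directional derivatives at $x$ — such that the pointwise factorisation $T(h_1,\dots,h_d)(x) \le \prod_j S_j h_j(x)$ holds for all nonnegative $h_j$ (this is the ``one line from each family contributes a gradient factor'' inequality), while the reverse bound $\prod_j S_j f_j(x) \le B^d\, T(f_1,\dots,f_d)(x)$ with $B$ depending only on $d$ encodes the sharpness/efficiency of the polynomial construction on the designated input $(f_j)$. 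Then I would apply Theorem \ref{thm:main} with $q = d/(d-1)$: for every $M \in L^d(J)$ of norm one there are positive linear operators $R_j$ with $\|R_j\|_{\ell^1(\mathcal{L}_j)\to L^1(J)} \le AB$ and $M(x)^d T(f_1,\dots,f_d)(x) \le \prod_j R_j f_j(x)$. Taking $f_j = \mathbf{1}$ and integrating gives $\int_J M^d N \le \prod_j \|R_j \mathbf{1}\|_1 \le (AB)^d \prod_j |\mathcal{L}_j|$; optimising over $M$ (i.e. taking $M$ proportional to $N^{1/(d-1)}$ suitably normalised in $L^d$, using $q'=d$ duality for $L^{d/(d-1)}$) recovers $\|N^{1/d}\|_{L^{d/(d-1)}}^{?}$ and rearranges to $\sum_{x\in J} N(x)^{1/(d-1)} \lesssim_d \prod_j |\mathcal{L}_j|^{1/(d-1)}$. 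The joints (single-family) case follows by specialising $\mathcal{L}_j = \mathcal{L}$ and invoking the symmetric Theorem \ref{thm:main_sym} instead, producing a single $R$ and hence the bound $|\mathcal{L}|^{d/(d-1)}$.

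\medskip

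The main obstacle is establishing the structural hypothesis — in particular the \emph{reverse} inequality \eqref{eq:eight}. The pointwise factorisation \eqref{eq:seven} is the ``easy'' direction, as any upper bound $T \le \prod S_j$ built from a vanishing polynomial for each family will do; the content is that one can simultaneously arrange $\prod_j S_j f_j \lesssim_d T(f_1,\dots,f_d)$ on the chosen dense-subspace input. This is exactly where Zhang's refinement of the Guth–Katz polynomial method is needed: one must choose the polynomials so that almost every joint ``sees'' a line from each family transversally enough that the product of gradient contributions does not overcount relative to the true joint-count $N$. Controlling this requires a careful Bézout-type degree budgeting across the $d$ families and a pigeonholing to restrict attention to joints where each polynomial is non-degenerate — and it is precisely an integrated rather than pointwise version of \eqref{eq:eight} that survives (as the footnote to \eqref{eq:eight} anticipates), which is all that Theorem \ref{thm:main} actually consumes. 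Once this structural input is in hand, the passage to the sharp multijoints estimate is the soft duality argument above.
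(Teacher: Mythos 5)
There is a genuine gap, and it is structural: your proposed route is circular. Theorem~\ref{thm:main} (and likewise Theorem~\ref{thm:main_sym}) takes the norm inequality \eqref{eq:four} as a \emph{hypothesis}, and in the joints/multijoints setting — with $X=J$, $\mathcal{Y}_j=\ell^1(\mathcal{L}_j)$, $T$ built from the multijoints kernel $\delta$, and $q=d/(d-1)$ — that hypothesis \emph{is} Zhang's estimate, as the paper itself points out when it derives $\|T(f_1,\dots,f_d)^{1/d}\|_{d/(d-1)}\lesssim(\|f_1\|_1\cdots\|f_d\|_1)^{1/d}$ \emph{from} Theorem~\ref{thm:zhang}. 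So applying Theorem~\ref{thm:main} with $f_j=\mathbf{1}$ and dualising in $M$ merely recovers the inequality you fed in; it cannot produce it. The structural hypothesis \eqref{eq:seven}--\eqref{eq:eight} carries no counting strength at all: in the paper it is verified (Theorem~\ref{thm:essence}) by an elementary linear-algebraic argument about concentration of the mass of $f$ on a chain of subspaces through a single point, with no polynomials anywhere, and it could not by itself imply any bound on the number of joints. The genuine content of Theorem~\ref{thm:zhang} — Zhang's polynomial-method proof — is exactly the piece your sketch defers to ``Zhang's polynomial argument'' inside the verification of \eqref{eq:eight}, where it does not belong and where it is not needed. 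The paper, consistently with this, does not prove Theorem~\ref{thm:zhang} at all: it is quoted from \cite{MR4118611} and used as the input $A$ in the duality machinery, whose output is the factorisation Theorem~\ref{thm:joints}, i.e.\ the logical flow is the reverse of the one you propose.

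A smaller technical slip, separate from the circularity: from the pointwise bound $M(x)^d\,T(f_1,\dots,f_d)(x)\le \prod_j R_jf_j(x)$ you cannot integrate to get $\int_J M^d N\le \prod_j\|R_j\mathbf{1}\|_1$, since a product of $L^1$ functions is not controlled by the product of their $L^1$ norms. The correct (and standard) recovery takes $d$-th roots first, $M\,N^{1/d}\le\prod_j(R_j\mathbf{1})^{1/d}$, applies H\"older with exponents $(d,\dots,d)$ to get $\int_J M\,N^{1/d}\le\prod_j\|R_j\mathbf{1}\|_1^{1/d}\le AB\prod_j|\mathcal{L}_j|^{1/d}$, and then uses $L^{d/(d-1)}$--$L^d$ duality in $M$. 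But even with this repaired, the argument only confirms that \eqref{eq:five}--\eqref{eq:six} imply \eqref{eq:four}; it does not constitute a proof of Theorem~\ref{thm:zhang}, which requires the polynomial method and lies outside the scope of this paper.
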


\noindent
We can take advantage of the possibility of repetitions, together with the scaling enjoyed by the estimates in this result, and also the density of the rationals in the real numbers, to see that the joints and multijoints problems fit into the framework
we have set out above. Let $\mathcal{Y}_j = l^1(\mathcal{L}_j)$ with counting measure (and we now assume the lines in $\mathcal{L}_j$ are distinct), and let $X$ be the set of multijoints of $\mathcal{L}_1, \dots , \mathcal{L}_d$, again with counting measure. For $f_j \in l^1(\mathcal{L}_j)$ let
$$ T(f_1, \dots , f_d)(x) 
= \sum_{l_j \in \mathcal{L}_j} \delta(x,l_1, \dots , l_d) f_1(l_1) \dots f_d(l_d)$$
where the {\bf multijoints kernel} $\delta$ is given by $\delta(x,l_1, \dots, l_d) = 1$ if  $x \in l_1, \dots ,l_d$ and the directions of $l_1, \dots , l_d$ are linearly independent, and by $\delta(x,l_1, \dots, l_d) =0$ otherwise. The second estimate of Theorem~\ref{thm:zhang} then gives 
$$\|T(f_1, \dots , f_d)^{1/d}\|_{d/(d-1)} \lesssim \left(\|f_1\|_1 \dots \|f_d\|_1\right)^{1/d}.$$
Similarly, if we take all the families $\mathcal{L}_j$ to be a common family $\mathcal{L}$, the first estimate of Theorem~\ref{thm:zhang} then gives the symmetric-form inequality
$$\|T(f, \dots , f)^{1/d}\|_{d/(d-1)} \lesssim \|f\|_1.$$
(Note that in order for $\|f\|_1$ to be finite, $f$ must be countably supported, and so the inequalities just displayed follow from those for finitely supported $f$ by monotone convergence.) 

\medskip
\noindent
Indeed, in this discussion, there is nothing to prevent us from taking $\mathcal{L}$ and $\mathcal{L}_j$ to be the families of {\em all} lines in $\mathbb{F}^d$. Corresponding to the case of joints, we obtain:
\begin{theorem}\label{thm:joints}
Let $\mathbb{F}$ be an arbitrary field and let $\mathcal{L}^\ast$ be the family of all lines in $\mathbb{F}^d$. For every finitely supported $M:\mathbb{F}^d \to \mathbb{R}_+$ there is a nonnegative function $g(x, l)$ defined on $\mathbb{F}^d \times \mathcal{L}^\ast$ such that for all $x \in \mathbb{F}^d$ and $l \in \mathcal{L}^\ast$ we have
$$M(x)^d \delta(x, l_1, \dots , l_d) \leq g(x, l_1) \dots g(x, l_d)$$
and for all $l \in \mathcal{L}^\ast$,
$$ \sum_{x \in l} g(x, l) \lesssim \left( \sum_{x \in \mathbb{F}^d}M(x)^d\right)^{1/d}$$
where the implicit constant depends only on $d$.
\end{theorem}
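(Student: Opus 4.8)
The idea is to realise Theorem~\ref{thm:joints} as an instance of Theorem~\ref{thm:main_sym}, applied to the symmetric positive multilinear joints operator
$$ T(f_1,\dots,f_d)(x)=\sum_{l_1,\dots,l_d}\delta(x,l_1,\dots,l_d)\,f_1(l_1)\cdots f_d(l_d), $$
with $\mathcal{Y}=l^1(\mathcal{L}^\ast)$, with $q=d/(d-1)$ so that $q'=d$, and with the underlying measure space taken to be $X:=\operatorname{supp}M$ (finite, by hypothesis on $M$) equipped with counting measure: this is a legitimate set of joints since every point of $\mathbb{F}^d$ is a joint of $\mathcal{L}^\ast$. (If one prefers to stay within the finite-lattice setting, replace $\mathcal{L}^\ast$ throughout by an increasing exhaustion by finite families and pass to a limit at the end.) The diagonal norm hypothesis $\|T(f,\dots,f)^{1/d}\|_{L^q(X)}\le A\|f\|_{l^1}$ with $A=A(d)$ is, after the passage from finite families to $\mathcal{L}^\ast$ by monotone convergence already noted above, exactly Zhang's symmetric estimate with the sum over joints restricted to the subset $X$; and $T$ saturates $X$ because through each point there are $d$ lines with linearly independent directions. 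Granting the structural hypothesis below, Theorem~\ref{thm:main_sym} applied to $M/\|M\|_{d}$ (where $\|M\|_{d}=(\sum_x M(x)^d)^{1/d}$) produces a positive linear $R:l^1(\mathcal{L}^\ast)\to L^1(X)$, necessarily of the form $Rh(x)=\sum_l\rho(x,l)h(l)$ with $\rho(x,l)=(R\mathbf{1}_{\{l\}})(x)\ge0$, satisfying $M(x)^dT(f_1,\dots,f_d)(x)\le Rf_1(x)\cdots Rf_d(x)$ and $\|R\|\le AB$. Testing the pointwise inequality on $f_j=\mathbf{1}_{\{l_j\}}$ gives $M(x)^d\delta(x,l_1,\dots,l_d)\le\rho(x,l_1)\cdots\rho(x,l_d)$ for $x\in X$, while $\sum_{x\in l}\rho(x,l)\le\|R\mathbf{1}_{\{l\}}\|_{L^1(X)}\le AB$; extending $g(x,l):=\rho(x,l)$ on $X\times\mathcal{L}^\ast$ by $g\equiv0$ off $X$ (legitimate since $M\equiv0$ there) yields the two asserted inequalities, with $\|M\|_{d}=1$ rescaled away by homogeneity and the implicit constant $AB$ depending only on $d$.

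Everything therefore rests on verifying the structural hypothesis \eqref{eq:seven_sym}--\eqref{eq:eight_sym} for $T$. Adding to $f$ a small multiple of $\sum_{x\in X}\sum_{j=1}^d\mathbf{1}_{\{l_{x,j}\}}$ (fixing, for each of the finitely many $x\in X$, a basis $l_{x,1},\dots,l_{x,d}$ of lines through $x$) perturbs $\|f\|_{l^1}$ negligibly and makes $T(f,\dots,f)(x)>0$ for every $x\in X$; since such $f$ are dense in the positive cone it suffices to construct $S$ for them — this is the relevant reading of ``dense subspace'', and is in any case all that the proof of Theorem~\ref{thm:main_sym} needs, as it only uses the integrated form of \eqref{eq:eight_sym}. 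Fix such an $f$ and a point $x\in X$. Lines through $x$ are parametrised by their directions $v\in\mathbb{P}^{d-1}(\mathbb{F})$, so $f$ restricts to a finitely supported $f_x\ge0$ on $\mathbb{P}^{d-1}(\mathbb{F})$ whose support spans $\mathbb{F}^d$, and $T(f,\dots,f)(x)=\tau(f_x):=\sum_{(v_1,\dots,v_d)\text{ linearly independent}}f_x(v_1)\cdots f_x(v_d)$. We look for $S$ of the form $Sh(x)=\sum_v s(x,v)\,h_x(v)$ (and $s(x,l):=0$ for $l\not\ni x$): testing \eqref{eq:seven_sym} against point masses shows it is equivalent to $s(x,v_1)\cdots s(x,v_d)\ge1$ for every linearly independent $d$-tuple of directions, and granted this inequality, \eqref{eq:seven_sym} for general $h_j$ follows immediately. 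Thus the whole problem reduces to the following combinatorial lemma: \emph{for every finitely supported $F\ge0$ on $\mathbb{P}^{d-1}(\mathbb{F})$ with $\operatorname{span}\operatorname{supp}F=\mathbb{F}^d$ there is $s\ge0$ on $\mathbb{P}^{d-1}(\mathbb{F})$ with $s(v_1)\cdots s(v_d)\ge1$ for all linearly independent $(v_j)$ and $\sum_v s(v)F(v)\le C_d\,\tau(F)^{1/d}$}; one then takes $B=C_d$.

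I would prove this lemma by choosing a complete flag $\{0\}=V_0\subsetneq V_1\subsetneq\cdots\subsetneq V_d=\mathbb{F}^d$ greedily adapted to $F$ — $V_{d-1}$ a hyperplane of maximal $F$-mass, $V_{d-2}$ a $(d-2)$-plane of maximal mass inside it, and so on — and letting $s$ be constant, say $=\sigma_i$, on each shell $V_i\setminus V_{i-1}$. Since a linearly independent $d$-tuple meets $V_i$ in at most $i$ members, the constraint $s(v_1)\cdots s(v_d)\ge1$ reduces to the finitely many inequalities $\prod_i\sigma_i^{a_i}\ge1$ over distributions $(a_1,\dots,a_d)$ with $\sum_i a_i=d$ and $\sum_{j\le i}a_j\ge i$; this system is solvable, and with the natural solution $\sum_v s(v)F(v)$ comes out comparable to $d\,(\prod_i m_i)^{1/d}$, where $m_i=F(V_i)-F(V_{i-1})$ is the $i$-th shell mass. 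One then combines this with the elementary lower bound $\tau(F)\ge\prod_{i=1}^d m_i$, obtained by picking one direction from each shell: any such tuple is automatically linearly independent, since its first $i$ members span $V_i$. The one genuine difficulty is that the shell masses need not be decreasing, so the naive choice of the $\sigma_i$ can violate the multiplicative constraint; I would deal with this by a dyadic/recursive device — if the maximal hyperplane carries at most half the total mass one may simply take $s\equiv1$, since then $\tau(F)\ge(\|F\|/2)^d$ by building a basis one vector at a time, and otherwise one restricts to that hyperplane and iterates — at a cost of only a factor depending on $d$. Making the choice of $s$, and the resulting estimate, robust against badly unbalanced mass distributions is the step I expect to require the most care; it is the real content of the theorem.
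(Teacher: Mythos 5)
Your overall architecture coincides with the paper's: realise the statement as an instance of Theorem~\ref{thm:main_sym} for the joints operator with $\mathcal{Y}=l^1(\mathcal{L}^\ast)$ and $q=d/(d-1)$, read off $g$ as the kernel of $R$, and reduce the structural hypothesis \eqref{eq:seven_sym}--\eqref{eq:eight_sym} to a pointwise-in-$x$ weight problem on directions --- your ``combinatorial lemma'' is precisely the paper's Theorem~\ref{thm:essence}. The genuine gap is that you do not prove this lemma, and it is the heart of the matter. Your greedy complete flag with shell-constant weights $\sigma_i=m_i^{-1}\bigl(\prod_j m_j\bigr)^{1/d}$ satisfies the admissibility constraint only when the shell masses are essentially non-increasing, and your proposed repair (``if the heaviest hyperplane carries at most half the mass take $s\equiv1$, otherwise restrict to it and iterate'') does not by itself produce weights that satisfy admissibility and the main estimate simultaneously: admissibility couples the directions inside and outside the heavy subspace (every independent $d$-tuple must leave it), while in the main estimate the cross terms between the heavy subspace and its light complement must be bounded below, and this interaction is exactly where the difficulty sits. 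The paper resolves it by selecting a chain of heavy subspaces of possibly non-consecutive dimensions with geometrically growing thresholds $\alpha_k=2^{k-1}$, proving the improved lightness estimate (Lemma~\ref{lemma:improved_lightness}), and --- crucially --- proving Lemma~\ref{lemma:concl}, which lower-bounds the multilinear sum by the product of layer masses even across a dimension gap of size greater than one, where ``pick one direction per shell'' is unavailable and the intermediate non-concentration must be used; admissibility then follows from the geometric decay \eqref{eq:F_q_monotone} of the layer masses. None of this mechanism is supplied by your sketch (incidentally, your constraint $\sum_{j\le i}a_j\ge i$ should read $\sum_{j\le i}a_j\le i$), and, as you yourself say, it is ``the real content of the theorem''.

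A second, smaller gap: the paper proves Theorem~\ref{thm:main_sym} only in the finite discrete setting (finite $X$ \emph{and} finite $Y$), deferring the general case, so with $\mathbb{F}$ infinite you cannot simply cite it for $\mathcal{Y}=l^1(\mathcal{L}^\ast)$; nor does ``an increasing exhaustion by finite families'' literally exist when $\mathcal{L}^\ast$ is uncountable. One can repair this with a net of finite subfamilies plus a compactness argument (the uniform bound $0\le g\le C_d$ and the finiteness of $\operatorname{supp}M$ make the limit passage workable), but the paper instead gives an explicit finite reduction: a linear-algebraic lemma producing a finite family $\mathcal{L}$ in which every partial joint at a point of $\operatorname{supp}M$ can be completed, an application of the finite theorem to $\mathcal{L}$, and an extension of $g$ to $\mathcal{L}^\ast\setminus\mathcal{L}$ by the constant $C_d$ at the unique point of $\operatorname{supp}M$ on such a line, with a short comparison argument for mixed joints. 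As written, your limiting step is an assertion rather than a proof, though it is the more easily fixable of the two gaps.
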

\noindent
The function $g$ here is the kernel of the operator $R$ of Theorem~\ref{thm:main_sym}. We shall deduce this from the discrete and finite version of Theorem~\ref{thm:main_sym} in Section~\ref{sect:joints_pf}, having verified the auxiliary hypothesis via Theorem~\ref{thm:essence} in Section~\ref{sect:verif}. Theorem~\ref{thm:essence} may perhaps be of independent interest.

\medskip
\noindent
As a direct consequence of this result we have:
\begin{corollary}\label{thm:mjoints}
Let $\mathcal{L}_1, \dots, \mathcal{L}_d$ be finite families of lines in $\mathbb{F}^d$ and let $J$ be the set of their multijoints. Then for every $M:J \to \mathbb{R}_+$ there are nonnegative functions $g_j(x, l_j)$ such that for all $x \in J$ and $l_j \in \mathcal{L}_j$ we have
$$M(x)^d \delta(x, l_1, \dots , l_d) \leq g_1(x, l_1) \dots g_d(x, l_d)$$
and for all $j$, for all $l_j \in \mathcal{L}_j$,
$$ \sum_{x \in l_j} g_j(x, l_j) \lesssim \left( \sum_{x \in J}M(x)^d\right)^{1/d}$$
where the implicit constant depends only on $d$.
\end{corollary}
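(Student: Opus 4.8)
The plan is to deduce Corollary~\ref{thm:mjoints} from Theorem~\ref{thm:joints} by a straightforward ``embedding'' argument. Given finite families $\mathcal{L}_1, \dots, \mathcal{L}_d$ of lines in $\mathbb{F}^d$, I would first observe that since $\mathcal{L}^\ast$ is the family of \emph{all} lines in $\mathbb{F}^d$, each $\mathcal{L}_j$ is a subfamily of $\mathcal{L}^\ast$ (after discarding repetitions, which only decreases the relevant quantities). Now apply Theorem~\ref{thm:joints} with the given $M : J \to \mathbb{R}_+$, extended by zero to all of $\mathbb{F}^d$; since $J$ is finite, $M$ is finitely supported, so the hypothesis is met. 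This produces a nonnegative $g(x,l)$ on $\mathbb{F}^d \times \mathcal{L}^\ast$ with
\begin{equation*}
M(x)^d \delta(x, l_1, \dots, l_d) \leq g(x, l_1) \dots g(x, l_d) \quad \text{for all } x, l_1, \dots, l_d,
\end{equation*}
and $\sum_{x \in l} g(x,l) \lesssim_d \left(\sum_x M(x)^d\right)^{1/d}$ for every $l \in \mathcal{L}^\ast$.

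The key step is then simply to restrict and relabel: for each $j$, define $g_j(x, l_j) := g(x, l_j)$ for $l_j \in \mathcal{L}_j \subseteq \mathcal{L}^\ast$. The pointwise inequality for the $g_j$ is inherited verbatim from that for $g$, since the right-hand side $g(x,l_1)\cdots g(x,l_d)$ with $l_j \in \mathcal{L}_j$ is a special case of the inequality in Theorem~\ref{thm:joints}. Likewise, for each $j$ and each $l_j \in \mathcal{L}_j$,
\begin{equation*}
\sum_{x \in l_j} g_j(x, l_j) = \sum_{x \in l_j} g(x, l_j) \lesssim_d \left(\sum_{x \in \mathbb{F}^d} M(x)^d\right)^{1/d} = \left(\sum_{x \in J} M(x)^d\right)^{1/d},
\end{equation*}
where the last equality uses that $M$ is supported on $J$. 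This gives exactly the two asserted conclusions.

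There is essentially no obstacle here: the content of the corollary is entirely subsumed by Theorem~\ref{thm:joints}, and the only point requiring a word is the harmless reduction to distinct lines within each $\mathcal{L}_j$ (repetitions in $\mathcal{L}_j$ can be collapsed, as a repeated line $l_j$ contributes the same $g_j(x,l_j)$ and the $l^1$-sum $\sum_{x \in l_j} g_j(x,l_j)$ is per-line, not weighted by multiplicity). One might also remark that the implicit constant is the same $d$-dependent constant as in Theorem~\ref{thm:joints}, since no further losses are incurred. Thus the proof is a single short paragraph: apply Theorem~\ref{thm:joints} to the zero-extension of $M$, and set $g_j := g|_{\mathbb{F}^d \times \mathcal{L}_j}$.
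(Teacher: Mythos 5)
Your proof is correct and is exactly the argument the paper intends: the corollary is stated there as ``a direct consequence'' of Theorem~\ref{thm:joints}, obtained precisely by extending $M$ by zero to the finite set $J \subseteq \mathbb{F}^d$ and restricting the resulting $g$ to each $\mathcal{L}_j \subseteq \mathcal{L}^\ast$. Your remarks on repetitions and on the sum $\sum_{x \in l_j}$ being taken over $l_j \cap J$ (hence dominated by the full sum over $l_j$) are the right ones, and no further argument is needed.
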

\medskip
\noindent
Note the similarity between these results and some of the estimates around the concept of visibility which were developed and used in Euclidean space by Guth \cite{Guth} and Bourgain and Guth \cite{MR2860188} in their analysis of the endpoint multilinear Kakeya inequality.

\medskip
\noindent
We also note that Theorem~\ref{thm:joints} has also been obtained in a more general form (using more direct and entirely different methods) by the second author, see \cite{Tang}. See also Theorem~\ref{thm:joints_upgrade} in Section~\ref{sect:concl} below where we indicate how the finite support hypothesis on $M$ may be dispensed with.

\subsection*{Acknowledgements} The first author 
was partially supported by Grant CEX2019-000904-S funded by MCIN/AEI/ 10.13039/501100011033 while visiting ICMAT in Madrid. The second author was supported by The Maxwell Institute Graduate School in Analysis and its
Applications (MIGSAA), a Centre for Doctoral Training funded by the UK Engineering and
Physical Sciences Research Council (grant EP/L016508/01), the Scottish Funding Council,
The University of Edinburgh and Heriot--Watt University.
The first author would like to thank Timo S. H\"anninen for various discussions relating to the material presented here.

\section{Proofs of Theorems~\ref{thm:main} and ~\ref{thm:main_sym} in the finite discrete setting}\label{sect:two}

\medskip
\noindent We first observe that it suffices to prove the desired conclusions for $q=1$, since the general case for $q > 1$ and $M \in L^{q'}$ follows from the case $q=1$ applied with the measure $M(x) {\rm d}\mu(x)$ in place of ${\rm d} \mu(x)$. Moreover, when $q =1$ it suffices to take $M \equiv 1$. 

\medskip
\noindent
We begin with Theorem~\ref{thm:main_sym} since the argument is a little simpler.

\medskip
\noindent
We will write the argument additively on finite discrete measure spaces, and in particular $\mathcal{Y}$ is a normed lattice over a finite set $Y$. The $x$-sums in what follows are all with respect to the weight $\mu$ on $X$ whose explicit appearance we suppress. Denote the kernel of $T$ by $K(x,y_1, \dots , y_d)$. The operator $R$ of the conclusion is determined by its kernel which we denote by $g(x,y)$. Let 
$$\mathcal{C} = \left\{ g(x, y) \geq 0 : K(x,y_1, \dots , y_d) \leq g(x, y_1) \dots g(x,y_d) \right\}$$
be the set of those $g$ satisfying our set of constraints (corresponding to the first conclusion of Theorem~\ref{thm:main_sym}). Note that $\mathcal{C}$ is nonempty and indeed there exist $g \in \mathcal{C}$ satisfying all of its defining inequalities with strict inequality. Moreover $\mathcal{C}$ is convex by the arithmetic-geometric mean inequality. Let
$\mathcal{F} = \{ f: Y \to \mathbb{R}_+ \, : \, \|f\|_\mathcal{Y} \leq 1\}$, and note that $\mathcal{F}$ is also convex. Corresponding to the second conclusion of Theorem~\ref{thm:main_sym}, we seek a $g \in \mathcal{C}$ such that
$$ \sup_{f \in \mathcal{F}}\sum_{x,y} g(x,y) f(y)\leq AB  ;$$
that is, we need to show that 
$$ \min_{g \in \mathcal{C}} \sup_{f \in \mathcal{F}} \; \sum_{y}\sum_{x} f(y) g(x,y) \leq AB.$$

\medskip
\noindent
Note that the mappings
$$ f \mapsto \sum_{y}\sum_{x} f(y) g(x,y) \mbox{ and } g \mapsto \sum_{y}\sum_{x} f(y) g(x,y) $$
for $g \in \mathcal{C}$ fixed and $f \in \mathcal{F}$ fixed respectively are linear, hence concave and convex. We may apply a minimax theorem\footnote{In the present context, we observe that Slater's condition is satisfied, so we may apply the results in \cite[p.226]{MR2061575}. Alternatively, we could use the more sophisticated lopsided minimax theorem found in \cite{MR749753} (and which was deployed in \cite{CHV1}), and which we shall need anyway in the general case. The topological hypotheses of that theorem reduce here to the continuity (for each fixed $f$) of the map $g \mapsto \sum_{x,y} f(y) g(x,y)$ and the existence of an $f \in \mathcal{F}$ (a suitable constant $f$ will work) such that the sets $\{g \in \mathcal{C} \, : \, 
\sum_{x,y} f(y) g(x,y) \leq \lambda\}$ are compact for all sufficiently large $\lambda$; with the usual topology on the underlying finite-dimensional Euclidean space these conditions are easily verified.} to obtain
$$ \min_{g \in \mathcal{C}} \sup_{f \in \mathcal{F}} \; \sum_{y}\sum_{x} f(y) g(x,y) = \sup_{f \in \mathcal{F}} \inf_{g \in \mathcal{C}} \; \sum_{y}\sum_{x} f(y) g(x,y).$$

\medskip
\noindent
Fix $f \in \mathcal{F}$. We now wish to show that 
\begin{equation*}\label{one}
\inf_{g \in \mathcal{C}} \; \sum_{x} \sum_{y} f(y) g(x,y) \leq 
AB.
\end{equation*}

\medskip
\noindent
The constraints given by $\mathcal{C}$ on different $x \in X$ are independent of each other, and so, with 
$$\mathcal{C}_x =\{ S(x, \cdot)
\, : \, K(x, y_1, \dots , y_d) \leq S(x,y_1) \dots S(x,y_d) \},$$ 
we have
$$\inf_{g \in \mathcal{C}} \; \sum_{x} \sum_{y} f(y) g(x,y)
=  \sum_{x} \inf_{S \in \mathcal{C}_x} \; \sum_{y} f(y) S(x,y).$$

\medskip
\noindent
By the auxiliary structural hypothesis, for each fixed $x$ and $f \in \mathcal{F}$, we have 
$$ \min_{S \in \mathcal{C}_x}\left(\sum_y f(y) S(x, y)\right)^d \leq B^d \sum_{y_1, \dots , y_d} K(x, y_1, \dots , y_d) f(y_1) \dots f(y_d),$$ 
and thus
$$\sum_{x}\inf_{S \in \mathcal{C}_x} \; \sum_{y} f(y) S(x,y)$$
$$ \leq
B\sum_{x}\left(\sum_{y_1, \dots , y_d}K(x, y_1, \dots , y_d) f(y_1) \dots f(y_d)\right)^{1/d}
\leq AB  $$
also by hypothesis. This concludes the argument for Theorem~\ref{thm:main_sym}.

\medskip
\noindent
Now we turn to Theorem~\ref{thm:main}. We recall that we are in the case $q=1$, and we continue in the finite and discrete setting. We use the shorthand notation $y = (y_1, \dots , y_d)$ where $y_j \in Y_j$ and $\mathcal{Y}_j$ is a normed lattice over the finite set $Y_j$. We are now looking for functions $g_1(x,y_1), \dots , g_d(x,y_d)$ satisfying the constraints defined by
  $$ \mathcal{C} = \{ (g_1, \dots g_d) \geq 0 \, : K(x,y)\leq g_1(x,y_1) \cdots g_d(x,y_d)\}$$
  and such that
  $$\sup_j  \sup_{\|f_j\|_{\mathcal{Y}_j} \leq 1}\sum_x \sum_{y_j \in Y_j} g_j(x, y_j)f_j(y_j)  \leq AB.$$
  Note that the set $\mathcal{C}$ is nonempty and convex (and once again there exist members of $\mathcal{C}$ satisfying the defining constraints with strict inequality). The left-hand side of the last expression can also be written as 
  $$ \sup_{\sum_j b_j = 1}  \sum_j b_j \sup_{\|f_j\|_{\mathcal{Y}_j} \leq 1} \sum_{x,y_j} g_j(x, y_j)f_j(y_j)$$
  or, equivalently, with $f_j$ replacing $b_j f_j$,
   $$ \sup_{\sum_j\|f_j\|_{\mathcal{Y}_j} \leq 1} \sum_j \sum_{x,y_j}g_j(x, y_j)f_j(y_j).$$
 Let
 $$\mathcal{F} = \{(f_1, \dots, f_d) \in (\mathcal{Y}_1 \times \dots \times \mathcal{Y}_d)_+ \, : \, \sum_j\|f_j\|_{\mathcal{Y}_j} \leq 1\},$$
 and note that $\mathcal{F}$ is also convex. What we are trying to show, then, is that
  $$ \min_{(g_1, \dots , g_d) \in \mathcal{C}} \sup_{(f_1, \dots , f_d) \in \mathcal{F}} \sum_j \sum_{x,y_j}g_j(x, y_j)f_j(y_j) \leq AB. $$
  
\medskip
\noindent
Once again we can use a minimax theorem to interchange the $\inf$ and the $\sup$, and therefore we wish to show
  $$\sup_{(f_1, \dots , f_d) \in \mathcal{F}}  \inf_{(g_1, \dots , g_d) \in \mathcal{C}}\sum_j \sum_{x,y_j} g_j(x, y_j)f_j(y_j) \leq AB.$$
  So fix $(f_j) \in \mathcal{F}$ and look at
  $$  \inf_{(g_1, \dots , g_d) \in \mathcal{C}} \sum_x \sum_j \sum_{y_j} g_j(x, y_j)f_j(y_j) = \sum_x \left(\inf_{(g_1, \dots , g_d) \in \mathcal{C}}\sum_j \sum_{y_j} g_j(x, y_j)f_j(y_j) \right)$$
  which is valid since the constraints imposed by $\mathcal{C}$ for different $x$ are independent of each other.
  
\medskip
\noindent
Temporarily fix $x$. So, with $K_x(y) = K(x,y)$, $S_j : Y_j \to \mathbb{R}_+$ let 
$$\mathcal{C}_x = \{(S_1, \dots , S_d) \, : K_x(y)\leq S_1(y_1) \cdots S_d(y_d)\}.$$
We are now looking at
$$ \inf_{(S_1, \dots , S_d) \in \mathcal{C}_x} \sum_j \sum_{y_j} S_j(y_j)f_j(y_j) = \inf_{(S_1, \dots , S_d) \in \mathcal{C}_x} \inf_{t_1 \dots t_d = 1} \frac{1}{d}\sum_j t_j d \sum_{y_j} S_j(y_j) f_j(y_j)$$
$$ =  \inf_{(S_1, \dots , S_d) \in \mathcal{C}_x}\prod_{j=1}^d
\left( d \sum_{y_j} S_j(y_j)f_j(y_j) \right)^{1/d}$$
$$ = d \inf_{(S_1, \dots , S_d) \in \mathcal{C}_x}\left( 
\sum_{y_1, \dots , y_d} S_1(y_1) \dots S_d(y_d) f_1(y_1) \dots f_d(y_d) \right)^{1/d}$$
$$ \leq Bd \left(\sum_{y} K(x,y) f_1(y_1) \dots f_d(y_d)\right)^{1/d}$$
where the first equality holds because $\mathcal{C}_x$ is invariant under replacing $S_j$ by $t_j S_j$ with $\prod_j t_j = 1$, the second equality holds by the arithmetic-geometric mean inequality and knowledge of its extremisers, and the final inequality holds because of the auxiliary hypothesis.

\medskip
\noindent
Therefore we have, using the main hypothesis,
$$\inf_{(g_1, \dots , g_d) \in \mathcal{C}}\sum_j \sum_{x, y_j} g_j(x, y_j)f_j(y_j)\leq Bd \sum_x \left(\sum_{y} K(x,y) f_1(y_1) \dots f_d(y_d) \right)^{1/d}$$
$$ = Bd  \| T(f_1, \dots , f_d)^{1/d}\|_1
\leq AB  d\prod_j \|f_j\|_{\mathcal{Y}_j}^{1/d}  \leq AB \sum_j \|f_j\|_{\mathcal{Y}_j} \leq  AB$$
as required, once again using the arithmetic-geometric mean inequality. This concludes the argument for Theorem~\ref{thm:main}.

\medskip
\noindent
{\bf Remark.} In the general case, the definition of the set $\mathcal{F}$ appearing in the proof will be amended to refer to the dense subspace of $\mathcal{Y}$ or $ \mathcal{Y}_1 \times \dots \times \mathcal{Y}_d$ which features in the auxiliary hypothesis. In the case of the application to joints, $\mathcal{F}$ will consist of finitely-supported functions defined on the class of all lines in $\mathbb{F}^d$. It is in the application of a suitable minimax theorem that we shall be required to provide substantial additional arguments relating to compactness in order to establish the full versions of Theorems~\ref{thm:main} and \ref{thm:main_sym}. 

\section{Verification of the auxiliary hypothesis for the multijoints kernel}\label{sect:verif}

\noindent
In this section we verify that the auxiliary structural hypotheses discussed above are indeed verified in the setting of the multijoints kernel $\delta$. Since $\delta$ is symmetric, it suffices to establish the symmetric version of the auxiliary hypothesis. 

\medskip
\noindent
Let $\mathcal{L}^\ast$ be the set of all lines in $\mathbb{F}^d$ and consider nonnegative $f$ belonging to the dense linear subspace of $l^1(\mathcal{L}^\ast)$ consisting of finitely supported functions. Focusing on the kernel of the desired operator $S$, we seek nonnegative $S(x,l)$ (defined on $\mathbb{F}^d \times \mathcal{L}^\ast$ and dependent on $f$) such that 
$$ S(x,l_1) \dots S(x,l_d) \geq 1 \mbox{ when } \delta(x,l_1, \dots, l_d) = 1$$
and 
$$ \sum_{l \in \mathcal{L}^\ast} S(x,l) f(l) \lesssim \left(\sum_{l_1, \dots ,l_d \in \mathcal{L}^\ast} \delta(x,l_1, \dots , l_d) f(l_1) \dots f(l_d)\right)^{1/d}.$$
This a pointwise task in $x$, and therefore it suffices to carry it out when $x=0$. 

\medskip
\noindent
Let $\mathcal{L}_0$ denote the set of all lines in $\mathbb{F}^d$ passing through $0$, and let
$\delta(l_1, \dots, l_d) = \delta(0,l_1, \dots, l_d)$. 
The following result thus verifies the auxiliary hypothesis for the case of joints, and is perhaps of independent interest. 

\begin{theorem}\label{thm:essence}
For each finitely supported nonnegative $f$ defined on $\mathcal{L}_0$, there is a function $S: \mathcal{L}_0 \to \mathbb{R}_+$ such that
$$ S(l_1) \dots S(l_d) \geq 1 \mbox{ when } \delta(l_1, \dots, l_d) = 1$$
and 
$$ \sum_{l \in \mathcal{L}_0} S(l) f(l) \lesssim \left(\sum_{l_1, \dots ,l_d \in \mathcal{L}_0} \delta(l_1, \dots , l_d) f(l_1) \dots f(l_d)\right)^{1/d}$$
where the implicit constant depends only on $d$.
\end{theorem}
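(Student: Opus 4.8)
The plan is to strip the statement down to a purely finite, linear‑algebraic assertion and prove it by induction on $d$, each inductive step ``quotienting out'' a single, cleverly chosen line.

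First I would reduce. Everything is pointwise at $x=0$, so the only data are the finite set $L:=\operatorname{supp}(f)\subseteq\mathcal L_0$ and the values $f(l)>0$, $l\in L$, and I may assume $L$ spans $\mathbb{F}^d$ — this is the only case with content, since if $L$ spans a proper subspace the right‑hand side is $0$ while the constraints force $S>0$ on every line and hence $\sum_l S(l)f(l)>0$. Once $S$ has been built on $L$ so that $S(l_1)\cdots S(l_d)\ge1$ for every independent $d$‑tuple \emph{inside $L$} and $\sum_{l\in L}S(l)f(l)\lesssim_d\tilde Q_d(f)^{1/d}$, I extend $S$ by a large constant to all of $\mathcal L_0$: any independent $d$‑tuple using a line outside $L$ then has $\prod_i S(l_i)\ge1$ automatically, and $\sum_l S(l)f(l)$ is unchanged. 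Here $\tilde Q_k(g):=\sum\delta(\,\cdot\,)\,g\otimes\cdots\otimes g$ is the weighted count of independent ordered $k$‑tuples.

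The engine of the induction is the identity
\[\tilde Q_d(f)=\sum_{l\in L}f(l)\,\tilde Q_{d-1}(\bar f_l),\]
where $\bar f_l$ is the push‑forward of $f|_{L\setminus\{l\}}$ under the quotient $\pi_l\colon\mathbb{F}^d\to\mathbb{F}^d/l$: it records that an independent $d$‑tuple in $L$ is a line $l\in L$ together with an independent $(d-1)$‑tuple of lines of $\mathbb{F}^d/l$ drawn from $\pi_l(L\setminus\{l\})$. Since a minimum is at most an $f$‑weighted average, choosing $l^*\in L$ to minimise $\tilde Q_{d-1}(\bar f_l)$ gives the key inequality
\[m\cdot\tilde Q_{d-1}(\bar f_{l^*})\le\sum_{l\in L}f(l)\,\tilde Q_{d-1}(\bar f_l)=\tilde Q_d(f),\qquad m:=\sum_{l\in L}f(l).\]
Now I induct on $d$ (for $d=1$ take $S\equiv1$). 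For $d\ge2$, with $l^*$ as above, apply the inductive hypothesis to $\bar f_{l^*}$ on the lines of $\mathbb{F}^d/l^*$ to get $\tilde S$ with $\prod_{i=1}^{d-1}\tilde S(\bar l_i)\ge1$ on independent $(d-1)$‑tuples and $\sum_{\bar l}\tilde S(\bar l)\bar f_{l^*}(\bar l)\le C_{d-1}\tilde Q_{d-1}(\bar f_{l^*})^{1/(d-1)}$, and set, for a parameter $\alpha>0$, $S(l^*):=\alpha^{-(d-1)}$ and $S(l):=\max\!\bigl(\alpha^{-(d-1)},\,\alpha\,\tilde S(\pi_{l^*}(l))\bigr)$ for $l\in L\setminus\{l^*\}$. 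Bounding the maximum by the sum, $\sum_l f(l)S(l)\le m\,\alpha^{-(d-1)}+\alpha\,C_{d-1}\tilde Q_{d-1}(\bar f_{l^*})^{1/(d-1)}$, and optimising $\alpha$ gives a $d$‑dependent multiple of $\bigl(m\,\tilde Q_{d-1}(\bar f_{l^*})\bigr)^{1/d}\le(\text{const})\,\tilde Q_d(f)^{1/d}$; tracking constants one in fact obtains $C_d=d$.

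The step I expect to be the main obstacle — and the one that pins down the form of $S$ — is verifying $S(l_1)\cdots S(l_d)\ge1$ for \emph{every} independent $d$‑tuple in $L$, including those not containing $l^*$. This rests on the elementary fact that $d$ lines of $\mathbb{F}^d$ spanning a $(d-1)$‑dimensional subspace can always be reduced to $d-1$ independent lines by deleting a single member (delete any member of a circuit). Applied to the images $\pi_{l^*}(l_1),\dots,\pi_{l^*}(l_d)$ in $\mathbb{F}^d/l^*$ — or, if $l^*$ is one of the $l_i$, to that one directly — it yields an index $j$ with $\{\pi_{l^*}(l_i):i\ne j\}$ an independent $(d-1)$‑tuple, so $\prod_{i\ne j}S(l_i)\ge\alpha^{d-1}\prod_{i\ne j}\tilde S(\pi_{l^*}(l_i))\ge\alpha^{d-1}$ while $S(l_j)\ge\alpha^{-(d-1)}$, and the product is $\ge1$. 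The remainder is bookkeeping: the push‑forward identity above, the translation between this finite‑discrete formulation and the stated theorem, and the reduction of the general multijoints statement (general $x$, several line families) to the case $x=0$ already carried out in the text.
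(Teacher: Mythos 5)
Your argument is correct, and it takes a genuinely different route from the paper's. The paper builds a maximal increasing flag of ``heavy'' subspaces $\pi_1\subsetneq\dots\subsetneq\pi_N$, splits the mass of $f$ into the layers $F_n$, takes $S$ constant on each layer, and then proves the counting estimate of Lemma~\ref{lemma:concl} (via the improved non-concentration inequality \eqref{eq:light_c}), with admissibility coming from the monotonicity \eqref{eq:F_q_monotone}. You instead induct on $d$: the exact pushforward identity $\tilde{Q}_d(f)=\sum_{l}f(l)\,\tilde{Q}_{d-1}(\bar{f}_l)$ for the quotients $\mathbb{F}^d/l$, together with choosing $l^*$ to minimise $\tilde{Q}_{d-1}(\bar{f}_l)$ (so that $m\,\tilde{Q}_{d-1}(\bar{f}_{l^*})\le\tilde{Q}_d(f)$), replaces the heavy-flag construction entirely; admissibility is recovered by the basis-extraction (circuit-deletion) observation in the quotient together with the floor $\alpha^{-(d-1)}$ built into your definition of $S$, and the main estimate follows from bounding the $\max$ by a sum and optimising in $\alpha$. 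I checked the pushforward identity, the two admissibility cases ($l^*$ among the $l_i$ or not, the latter using that $d$ nonzero images spanning the $(d-1)$-dimensional quotient contain a basis of size $d-1$), and the recursion for the constant: with $C_1=1$ one indeed gets $C_d=d$, which is more explicit and much smaller than what the paper's argument produces (its $\beta_n$ involve powers of $2$ and binomial coefficients and are exponential in $d$). What the paper's approach buys instead is the explicit structural information about where $f$ concentrates — the heavy flag and the layered form of $S$ — which matches its heuristic discussion and is the shape of argument the authors intend to extend to joints of higher-dimensional planes. One caveat, shared with the paper rather than a gap in your proof: your opening reduction quietly reinterprets the statement in the degenerate case. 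If ${\rm supp}\, f$ does not span $\mathbb{F}^d$, the right-hand side vanishes and, as you observe, no $S$ admissible against all of $\mathcal{L}_0$ can satisfy the main estimate; the paper's own reading (see its preliminary remarks, where it sets $S\equiv 0$ and notes admissibility ``never comes into play'') is that the product condition is only required for lines in ${\rm supp}\, f$, and under that reading your spanning-case argument plus the trivial choice $S\equiv 0$ in the degenerate case gives the full result, which is also all that the application through the infimum over the constraint set $\mathcal{C}_x$ actually requires.
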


\subsection{Preliminary remarks to the proof}
Note that, for $f$ fixed, we only need to define $S$ on those $l \in \mathcal{L}_0$ which are in the support of $f$. Fix $f$ nonnegative finitely supported on $\mathcal{L}_0$. We want to show that there is a set of weights $(S(l))_{l\in {\rm supp} \, f}$ satisfying 
$$ S(l_1) \dots S(l_d) \geq 1 {\mbox { whenever }} \delta(l_1, \dots, l_d) =1$$
(admissibility), and such that 
\[\left(\sum_{l \in {\rm supp} f}S(l)f(l)\right)^d \lesssim \sum_{l_1, \dots , l_d}\delta(l_1, \dots , l_d)f(l_1) \cdots f(l_d)\]
(main estimate).

\medskip
\noindent
The naive approach is to try the ansatz $S \equiv 1$ and see what happens in the main estimate.
Immediately we see there is an obstruction to it holding: if $f$ happens to be supported on a collection of lines which all lie in some fixed hyperplane, the right-hand side will be zero while the left-hand side need not be. Therefore our construction of $S$ will need to take account of this obstruction. When the obstruction is in place, however, we can simply set $S \equiv 0$ since the admissibility condition never comes into play. 

\medskip
\noindent
More generally, we may expect similar issues to present themselves 
when the total mass of $f$ is concentrated on some hyperplane $\pi$ in the sense that 
$$ \sum_{l \subseteq \pi} f(l) \gtrsim  \sum_{l \nsubseteq \pi} f(l).$$
Moreover, concentration of mass on a hyperplane may arise because of {\em a priori} concentration on a lower-dimensional subspace. It is therefore pertinent to consider in turn the possibility of concentration of mass of $f$ on lines, then on $2$-planes, then on $3$-planes, and so on up to and including hyperplanes. If there is no concentration occuring at any stage, we may hope to be able to take $S \equiv 1$; if concentration does occur at some stage, we may expect to take $S$ to be small on the lines contributing to the concentration and large on the remaining lines, in such a way that the admissibility condition holds. If concentration occurs at multiple stages, we will correspondingly take a graded approach to defining $S$.

\subsection{Construction of an increasing sequence of subspaces and its immediate properties} Let $1 =\alpha_1 < \alpha_2 < \dots < \alpha_{d-1}$ which we take to be given by $\alpha_k = 2^{k-1}$.
We first look for the smallest $k_1$, with $1\leq k_1 \leq d-1$, for which there is an {\em $\alpha_{k_1}$-heavy $k_1$-plane} $\pi_1$, i.e. for which  
$$ \sum_{l \subseteq  \pi_1} f(l) > \alpha_{k_1} \sum_{l \not\subseteq  \pi_1} f(l).$$
Of course there may be no such $\pi_1$. If there is one, we next look for the smallest $k_2$, with $k_1 < k_2 \leq d-1$, for which there is an {\em $\alpha_{k_2}$-heavy $k_2$-plane $\pi_2$ which contains $\pi_1$}, i.e. for which
$$ \sum_{l \subseteq  \pi_2, l \not\subseteq \pi_1} f(l) > \alpha_{k_2} \sum_{l \not\subseteq  \pi_2} f(l).$$
Again, there may be no such $\pi_2$. If there is one, we next look for the smallest $k_3$, with $k_2 < k_3 \leq d-1$, for which there is an {\em $\alpha_{k_3}$-heavy $k_3$-plane $\pi_3$ which contains $\pi_2$}, i.e. for which
$$ \sum_{l \subseteq  \pi_3, l \not\subseteq \pi_2} f(l) > \alpha_{k_3} \sum_{l \not\subseteq  \pi_3} f(l).$$
We continue this process until we are forced to stop -- either because we never started if there were no $\alpha_k$-heavy planes of any dimension $k$ less than $d$, or because we have arrived at some $\pi_N$ of dimension $k_N = d-1$, or because we have a $\pi_N$ of dimension $k_N < d-1$ but there are no $\alpha_{k}$-heavy $k$-planes for any $k_N < k \leq d-1$.

\medskip
\noindent
We shall deal separately with the case in which the sequence of subspaces is empty, and so we assume we have a non-empty maximal\footnote{in the sense that it cannot be extended} increasing sequence  
of subspaces $\pi_1 \subsetneq \pi_2 \subsetneq \dots \subsetneq \pi_N$ (with $N \geq 1$) of dimensions $(1 \leq)\, k_1 < k_2 < \dots < k_N \,(\leq d-1)$ respectively such that for each $1 \leq n \leq N$,\footnote{We interpret the condition $l \not\subseteq \pi_0$ to be the void condition.} 
\begin{equation}\label{eq:heavy}
\sum_{l \subseteq  \pi_{n}, l \not\subseteq \pi_{n-1}} f(l) > \alpha_{k_n} \sum_{l \not\subseteq  \pi_{n}} f(l)
\end{equation}
and such that for all subspaces $\pi$ which contain $\pi_{n-1}$ and which satisfy $ k_{n-1} < {\rm dim} \, \pi < k_{n}$ we have
\begin{equation*}
\sum_{l \subseteq  \pi, l \not\subseteq \pi_{n-1}} f(l) \leq \alpha_{{\rm dim} \, \pi } \sum_{l \not\subseteq  \pi} f(l) 
\end{equation*}
and thus
\begin{equation}\label{eq:light}
\sum_{l \subseteq  \pi, l \not\subseteq \pi_{n-1}} f(l) \leq \alpha_{k_n -1} \sum_{l \not\subseteq  \pi} f(l).
\end{equation}

\medskip
\noindent
We can qualitatively improve the right-hand side of this inequality to include the additional constraint that $l \subseteq \pi_n$ at the expense of a multiplicative constant: 

\begin{lemma}\label{lemma:improved_lightness}
We have
\begin{equation}\label{eq:light_c} 
\sum_{l \subseteq  \pi, l \not\subseteq \pi_{n-1}} f(l) \leq 4 \alpha_{k_n -1}\sum_{l \subseteq  \pi_{n}, l \not\subseteq \pi} f(l).
\end{equation}
\end{lemma}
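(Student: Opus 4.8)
The plan is to exploit the hypotheses \eqref{eq:heavy} and \eqref{eq:light} relating $\pi$, $\pi_{n-1}$ and $\pi_n$, together with the additivity of $f$-mass over disjoint classes of lines, to bootstrap the a priori ``lightness'' bound \eqref{eq:light} (whose right-hand side counts \emph{all} lines not contained in $\pi$) into the sharper bound \eqref{eq:light_c} (whose right-hand side counts only lines lying in $\pi_n$ but not in $\pi$). The key observation is that $\pi$ satisfies $\pi_{n-1} \subseteq \pi \subsetneq \pi_n$ with $\dim \pi < k_n$, so lines in $\pi_n$ split into those contained in $\pi$ and those not, and I want to show that a definite proportion of the mass $\sum_{l \not\subseteq \pi} f(l)$ already lives inside $\pi_n$.

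First I would partition the mass $\sum_{l \not\subseteq \pi} f(l)$ into the two pieces $P := \sum_{l \subseteq \pi_n,\, l \not\subseteq \pi} f(l)$ (the ``good'' part living in $\pi_n$) and $Q := \sum_{l \not\subseteq \pi_n} f(l)$ (the ``bad'' part, lines escaping $\pi_n$ entirely). So $\sum_{l \not\subseteq \pi} f(l) = P + Q$, and the target \eqref{eq:light_c} will follow from \eqref{eq:light} once I show $Q \lesssim P$ — more precisely, something like $Q \leq 3P$ would give the constant $4$ after combining with \eqref{eq:light}. To control $Q$, I would use the heaviness of $\pi_n$: by \eqref{eq:heavy} with $n$ in place of $n$, we have $Q = \sum_{l \not\subseteq \pi_n} f(l) < \alpha_{k_n}^{-1} \sum_{l \subseteq \pi_n,\, l \not\subseteq \pi_{n-1}} f(l)$. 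Now $\{l \subseteq \pi_n,\, l \not\subseteq \pi_{n-1}\}$ further splits according to whether $l \subseteq \pi$ or not: writing $P' := \sum_{l \subseteq \pi,\, l \not\subseteq \pi_{n-1}} f(l)$, we get $\sum_{l \subseteq \pi_n,\, l \not\subseteq \pi_{n-1}} f(l) = P' + P$ (using $\pi_{n-1} \subseteq \pi \subseteq \pi_n$, so a line in $\pi_n$ avoiding $\pi_{n-1}$ either sits in $\pi$ or, if not, it certainly avoids $\pi_{n-1}$ and lies in $\pi_n \setminus \pi$). Hence $Q < \alpha_{k_n}^{-1}(P' + P)$.

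It remains to absorb the $P'$ term. Here I would invoke \eqref{eq:light} itself: $P' \leq \alpha_{k_n - 1} \sum_{l \not\subseteq \pi} f(l) = \alpha_{k_n-1}(P + Q)$. Substituting, $Q < \alpha_{k_n}^{-1}(\alpha_{k_n-1}(P+Q) + P) = \alpha_{k_n}^{-1}((\alpha_{k_n-1}+1)P + \alpha_{k_n-1} Q)$. Since the $\alpha_k$ are chosen to be $2^{k-1}$, we have $\alpha_{k_n} = 2\alpha_{k_n-1}$, so $\alpha_{k_n}^{-1}\alpha_{k_n-1} = 1/2$, giving $Q < \tfrac12 Q + \alpha_{k_n}^{-1}(\alpha_{k_n-1}+1)P$, i.e. $Q < 2\alpha_{k_n}^{-1}(\alpha_{k_n-1}+1)P \leq 2\cdot\tfrac12\cdot 2 \cdot P = 2P$ (crudely bounding $\alpha_{k_n-1}+1 \leq 2\alpha_{k_n-1}$ since $\alpha_{k_n-1}\geq 1$). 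Therefore $\sum_{l\not\subseteq\pi} f(l) = P + Q < 3P$, and feeding this into \eqref{eq:light} yields $\sum_{l\subseteq\pi,\, l\not\subseteq\pi_{n-1}} f(l) \leq \alpha_{k_n-1} \cdot 3 P < 4\alpha_{k_n-1} P$, which is exactly \eqref{eq:light_c}.

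The main obstacle, such as it is, is purely bookkeeping: one must be careful that the set inclusions $\pi_{n-1}\subseteq\pi\subsetneq\pi_n$ are genuinely available (they are, by the way $\pi$ enters the statement — $\pi$ is one of the intermediate subspaces between $\pi_{n-1}$ and $\pi_n$ considered just before Lemma~\ref{lemma:improved_lightness}), and that the decompositions of the various line-classes into disjoint pieces are exhaustive and non-overlapping, so that $f$-masses genuinely add. There is also the degenerate possibility $n = N$ and $k_N = d-1$, or $P = 0$; if $P=0$ the argument shows $Q=0$ too and \eqref{eq:light_c} holds trivially (both sides behave consistently), so no separate case is really needed. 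The geometric-series collapse $\alpha_{k_n} = 2\alpha_{k_n-1}$ is exactly what the choice $\alpha_k = 2^{k-1}$ was engineered to provide, and the constant $4$ is the result of the resulting $Q < 2P$; a sharper constant is possible but $4$ suffices.
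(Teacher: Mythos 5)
Correct, and essentially the paper's own argument: both proofs combine the strict heaviness inequality \eqref{eq:heavy} for $\pi_n$ with the lightness inequality \eqref{eq:light} for $\pi$, using the same three-way splitting of the mass into $\{l\subseteq\pi,\,l\not\subseteq\pi_{n-1}\}$, $\{l\subseteq\pi_n,\,l\not\subseteq\pi\}$ and $\{l\not\subseteq\pi_n\}$, followed by an absorption step. The only difference is organisational — you eliminate $Q=\sum_{l\not\subseteq\pi_n}f(l)$ first and substitute back, while the paper rearranges directly for the left-hand side to get the constant $\alpha_{k_n-1}(\alpha_{k_n}+1)/(\alpha_{k_n}-\alpha_{k_{n-1}})\leq 4\alpha_{k_n-1}$ — and your parenthetical about the case $P=0$ is harmless (the strict inequalities in fact rule that case out), so the proof stands as written.
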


\begin{proof}
Notice that for each $n$ and for each subspace $\pi$ which contains $\pi_{n-1}$ and which is strictly contained in $\pi_n$ we have, by \eqref{eq:light} and \eqref{eq:heavy},
$$\sum_{l \subseteq  \pi, l \not\subseteq \pi_{n-1}} f(l) \leq \alpha_{k_n -1} \sum_{l \not\subseteq  \pi} f(l) = \alpha_{k_n -1} \sum_{l \not\subseteq  \pi, l \subseteq \pi_n} f(l) + \alpha_{k_n -1} \sum_{l \not\subseteq  \pi_n} f(l)$$
$$ < \alpha_{k_n -1} \sum_{l \not\subseteq  \pi, l \subseteq \pi_n} f(l) + \frac{\alpha_{k_n -1}}{\alpha_{k_n}}\sum_{l \subseteq  \pi_{n}, l \not\subseteq \pi_{n-1}} f(l)$$
$$ = \alpha_{k_n -1} \sum_{l \not\subseteq  \pi, l \subseteq \pi_n} f(l) 
+ \frac{\alpha_{k_n -1}}{\alpha_{k_n}}\sum_{l \subseteq  \pi_{n}, l \not\subseteq \pi} f(l) 
+ \frac{\alpha_{k_n -1}}{\alpha_{k_n}}\sum_{l \subseteq  \pi, l \not\subseteq \pi_{n-1}} f(l).$$
Rearranging this inequality gives
$$ \left(1- \frac{\alpha_{k_n -1}}{\alpha_{k_n}}\right) \sum_{l \subseteq  \pi, l \not\subseteq \pi_{n-1}} f(l) \leq \alpha_{k_n -1}\left(1 + \frac{1}{\alpha_{k_n}}\right)\sum_{l \subseteq  \pi_{n}, l \not\subseteq \pi} f(l) $$
or
\begin{equation*}
\sum_{l \subseteq  \pi, l \not\subseteq \pi_{n-1}} f(l) \leq \alpha_{k_n -1} \left(\frac{\alpha_{k_n} +1}{\alpha_{k_n} - \alpha_{k_{n-1}}}\right) \sum_{l \subseteq  \pi_{n}, l \not\subseteq \pi} f(l).
\end{equation*}
With $\alpha_k = 2^{k-1}$ we have $1 < (\alpha_{k_n} +1)/(\alpha_{k_n} - \alpha_{k_{n-1}}) \leq 4$, and we are done.
\end{proof} 

\subsection{Arranging the mass of $f$ into layers}
For $1 \leq n \leq N+1$ let 
$$ F_n = \sum_{l \subseteq \pi_n, l \not\subseteq \pi_{n-1}} f(l)$$
(where we interpret the conditions $l \not\subseteq \pi_0$ and  $l \subseteq \pi_{N+1}$ as void). 
Note that by \eqref{eq:heavy}, we immediately have
\begin{equation}\label{eq:F_q_monotone}
F_n > \alpha_{k_n} F_{n+1}.
\end{equation}

\medskip
\noindent
For a $k$-plane $\pi$ and a $K$-plane $\Pi$ with $K >k$, and $l_1, \dots ,l_{K-k} \subseteq \Pi$, but $l_1, \dots ,l_{K-k} \not\subseteq \pi$, let $\delta_{\pi, \Pi}(l_1, \dots ,l_{K-k}) = 1$ if $e(l_1), \dots , e(l_{K-k})$ are linearly independent and $\delta_{\pi, \Pi}(l_1, \dots, l_{K-k}) = 0$ otherwise. We have the following lemma which is proved below in Section~\ref{sect:L2pf}:
\begin{lemma}\label{lemma:concl} For some constants $\beta_n$, we have for $1 \leq n \leq N+1$,
$$ F_n^{k_n - k_{n-1}} \leq \beta_n \sum_{\substack{ l_{k_{n-1} + 1}, \dots , l_{k_n} \subseteq \pi_n,\\ l_{k_{n-1} + 1}, \dots , l_{k_n} \not \subseteq \pi_{n-1}}}
\delta_{\pi_{n-1}, \pi_n}(l_{k_{n-1} + 1}, \dots , l_{k_n}) f(l_{k_{n-1} + 1}) \dots f(l_{k_n})$$
(where we take $k_0 = 0$ and $k_{N+1} = d$.)
\end{lemma}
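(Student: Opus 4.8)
The plan is to prove the estimate separately on each "layer" $n$, working inside the $k_n$-plane $\pi_n$ modulo the $k_{n-1}$-plane $\pi_{n-1}$. Fix $n$ with $1\leq n\leq N+1$ and write $k=k_{n-1}$, $K=k_n$, so we must bound $F_n^{K-k}$, where $F_n=\sum_{l\subseteq\pi_n,\,l\not\subseteq\pi_{n-1}}f(l)$, by (a constant times) the sum over $(K-k)$-tuples of lines in $\pi_n\setminus\pi_{n-1}$ whose images $e(l)$ in the quotient $\pi_n/\pi_{n-1}$ are linearly independent. Passing to that quotient space, which has dimension $K-k$, this is exactly a statement of the form: the $(K-k)$-th power of the total $f$-mass of lines is controlled by the weighted count of $(K-k)$-tuples of lines spanning the whole space. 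In other words, it is the "essence" estimate (Theorem~\ref{thm:essence}), or rather the bound appearing inside it, but applied in dimension $K-k$ rather than $d$, together with the lightness information \eqref{eq:light_c} guaranteeing that no proper subspace carries a dominant share of the mass.

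The key steps, in order, would be: (i) reduce to working in $V:=\pi_n/\pi_{n-1}\cong\mathbb{F}^{K-k}$, with $f$ pushed forward to a finitely supported nonnegative function on the lines of $V$ through the origin (lines of $\pi_n$ not contained in $\pi_{n-1}$ correspond to lines of $V$ through $0$, and $\delta_{\pi_{n-1},\pi_n}$ becomes the joints kernel $\delta$ at $0$ in $V$); (ii) observe that maximality of the sequence $\pi_1\subsetneq\cdots\subsetneq\pi_N$, via Lemma~\ref{lemma:improved_lightness} (inequality \eqref{eq:light_c}), tells us that for every proper nonzero subspace $W\subsetneq V$ the $f$-mass on lines inside $W$ is at most a constant multiple (depending only on $d$, through $\alpha_{k_n-1}\leq 2^{d-2}$) of the $f$-mass on lines outside $W$ — i.e. no proper subspace of $V$ is heavy; (iii) prove the quantitative statement that in $\mathbb{F}^m$, if no proper subspace is $C$-heavy then $\big(\sum_l f(l)\big)^m\lesssim_{m,C}\sum_{l_1,\dots,l_m}\delta(l_1,\dots,l_m)f(l_1)\cdots f(l_m)$, presumably by an inductive/greedy argument on $m$: pick a line of large mass, quotient it out, and use the non-heaviness to ensure a positive fraction of the remaining mass survives so that the inductive hypothesis in dimension $m-1$ can be applied and the linear-independence condition is preserved; (iv) assemble the constant $\beta_n$ from the constant in (iii) with $m=K-k$ and $C$ the constant from (ii), noting it depends only on $d$.

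The main obstacle I expect is step (iii): making the induction on dimension genuinely quantitative. One must track, at each stage, how much mass is lost when we quotient by the chosen heavy line and discard lines that become trivial or that destroy linear independence, and argue that the non-heaviness hypothesis (which controls concentration on \emph{every} proper subspace, hence in particular survives passage to quotients with a controlled deterioration of the constant) forces a definite proportion of the mass to remain in "general position". Getting the constants to close up — i.e. verifying that the heaviness threshold one needs in dimension $m-1$ is still met after quotienting — is the delicate bookkeeping; this is presumably why the $\alpha_k=2^{k-1}$ were chosen with geometrically growing thresholds, so that the loss at each of the boundedly many stages is absorbed. The passage from the quotient statement back to the original ambient spaces in step (i), and the handling of the degenerate extreme layers $n=1$ (where $k_0=0$) and $n=N+1$ (where $k_{N+1}=d$ and the heaviness at stage $N$ is what prevents further concentration), should be routine once step (iii) is in hand.
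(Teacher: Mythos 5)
Your proposal reaches the lemma by a genuinely different route from the paper, and the reduction steps are sound: lines of $\pi_n$ through $0$ not contained in $\pi_{n-1}$ push forward to lines through $0$ in $V=\pi_n/\pi_{n-1}$, the kernel $\delta_{\pi_{n-1},\pi_n}$ depends only on these images (its vanishing/non-vanishing is exactly the condition $\dim\mathrm{span}\{\pi_{n-1},l_{k_{n-1}+1},\dots,l_{k_n}\}=k_n$), and Lemma~\ref{lemma:improved_lightness} says precisely that no proper nonzero subspace of $V$ is $4\alpha_{k_n-1}$-heavy for the pushed-forward mass; the extreme layers $n=1$ and $n=N+1$ (and the empty-sequence case) are indeed routine, with the unimproved inequality \eqref{eq:light} already in the right form when $n=N+1$. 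The paper, by contrast, never passes to quotients or inducts on dimension: it stays inside the fixed pair $(\Pi_0,\Pi_1)$, expands $\bigl(\sum_{l\in\Delta}f(l)\bigr)^r$ over $r$-tuples, stratifies these by $j=\dim\mathrm{span}\{\Pi_0,l_1,\dots,l_r\}$ into sets $\Gamma_j$, and uses \eqref{eq:light_c} once per step to replace a line lying in the $j$-plane spanned so far by a line of $\Pi_1$ outside it, giving $\sum_{\Gamma_j}\lesssim\sum_{\Gamma_{j+1}}$ and hence domination by the full-rank stratum. Your organisation isolates a clean standalone statement (non-concentration implies the ``$S\equiv 1$'' estimate in $\mathbb{F}^m$), at the cost of having to check that the hypothesis survives quotienting; the paper's stratification avoids formulating that auxiliary lemma but is essentially the same promotion-by-one-dimension mechanism.

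The step you left open, (iii), does close, and with none of the constant-degradation you fear. Writing $G_m(f)=\sum_{l_1,\dots,l_m}\delta(l_1,\dots,l_m)f(l_1)\cdots f(l_m)$ for finitely supported $f$ on the lines through $0$ in an $m$-dimensional space $V$, one has the exact identity $G_m(f)=\sum_{l_0}f(l_0)\,G_{m-1}(f_{l_0})$, where $f_{l_0}$ is the pushforward to $V/l_0$ of $f$ restricted to $l\neq l_0$, because $(l_0,l_1,\dots,l_{m-1})$ have independent directions if and only if the images of $l_1,\dots,l_{m-1}$ are independent in $V/l_0$. If no proper nonzero subspace of $V$ is $C$-heavy, the same holds for $f_{l_0}$ in $V/l_0$ with the \emph{same} constant $C$, since a proper nonzero subspace of $V/l_0$ pulls back to a proper nonzero subspace of $V$ containing $l_0$; moreover $C$-non-heaviness applied to $W=l_0$ gives $\sum_{l\neq l_0}f(l)\geq(1+C)^{-1}\sum_l f(l)$, so no greedy choice of $l_0$ is needed. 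Induction on $m$ (base case $m=1$ trivial) then yields $G_m(f)\geq(1+C)^{-m(m-1)/2}\bigl(\sum_l f(l)\bigr)^m$, and taking $m=k_n-k_{n-1}$ and $C=4\alpha_{k_n-1}\leq 2^{d}$ produces a $\beta_n$ depending only on $d$, completing your argument.
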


\medskip
\noindent
In particular this lemma applies in the exceptional case that the sequence of  subspaces is empty, in which case the conclusion reads as
$$ \left(\sum_{l} f(l)\right)^d
\lesssim \sum_{l_1, \dots ,l_d} \delta(l_1, \dots , l_d) f(l_1) \dots f(l_d).$$
This demonstrates that the choice $S \equiv 1$ satisfies the main condition (as well as trivially the admissibility condition) in the exceptional case.

\medskip
\noindent
Note that we have as a direct consequence of the lemma that
\begin{equation}\label{eq:F_prod}
F_1^{k_1 - k_0} \cdots F_{n}^{k_n - k_{n-1}} \cdots F_{N+1}^{d - k_{N}} \leq \left(\prod_{n=1}^{N+1} \beta_n\right) \sum_{l_1, \dots , l_d}\delta(l_1, \dots , l_d)f(l_1) \cdots f(l_d).
\end{equation}

\subsection{Definition of $S$ and the main condition} 
We assume the sequence of subspaces is nonempty. For parameters $\rho_1, \dots ,\rho_{N+1}$ defined below, we define $S$ by
\[S(l) = 
\begin{cases} 
\rho_1 & l \subseteq \pi_1\\ 
\rho_2 & l \subseteq  \pi_2, l \not\subseteq \pi_1, \\
\vdots\\
\rho_n & l \subseteq  \pi_n, l \not\subseteq \pi_{n-1}, \\
\vdots\\
\rho_N & l \subseteq  \pi_N, l \not\subseteq \pi_{N-1} \\
\rho_{N+1} & l \not\subseteq \pi_{N}. \\
\end{cases}\]

\medskip
\noindent
Using the arrangement of the mass of $f$ into layers, we therefore have
$$ \sum_l S(l) f(l) = \rho_1 F_1 + \dots + \rho_n F_n + \dots + \rho_{N+1} F_{N+1}.$$ 

\medskip
\noindent
In the light of \eqref{eq:F_prod}, we shall therefore choose 
$$ \rho_n = F_n^{-1} \prod_{n=1}^{N+1}F_n^{\frac{k_n - k_{n-1}}{d}}$$ where $k_0=0$ and $k_{j+1} = d$, and this verifies the main condition on $S$.

\subsection{The admissibility condition}
Again we may assume that the sequence of subspaces is nonempty. If $\delta(l_1, \dots , l_d) =1$, the worst-case scenario is that there are $k_1$ $l$'s contained in $\pi_1$, $(k_2 - k_1)$ $l$'s contained in $\pi_2$ but not contained in $\pi_1$, etc. So the worst value of $S(l_1) \cdots S(l_d)$ will be 
\begin{align}\label{eq:worstS}
\rho_1^{k_1}\rho_2^{k_2 - k_{1}} \cdots \rho_{N+1}^{d-k_N}
= F_1^{-k_1}F_2^{-(k_2 - k_1)} \cdots 
F_{N+1}^{-(d-k_N)} \left(\prod_{n=1}^{N+1}F_n^{\frac{k_n - k_{n-1}}{d}} \right)^d=1.
\end{align}
With $m_n$ denoting the number of
$l$'s contained in $\pi_n$ but not contained in $\pi_{n-1}$, the general case follows from:
\begin{lemma}
Suppose that $m_1 + \dots + m_{N+1} = d$ and $m_n \leq k_n$ for all $1 \leq n \leq N+1$. Then
$$ \rho_1^{m_1} \cdots \rho_n^{m_n} \cdots  \rho_{N+1}^{m_{N+1}} \geq 1.$$
\end{lemma}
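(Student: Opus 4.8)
The plan is to prove the inequality $\rho_1^{m_1}\cdots\rho_{N+1}^{m_{N+1}}\geq 1$ directly from the explicit formula for $\rho_n$. Writing $P = \prod_{n=1}^{N+1} F_n^{(k_n-k_{n-1})/d}$, we have $\rho_n = F_n^{-1} P$, so that
\[
\rho_1^{m_1}\cdots\rho_{N+1}^{m_{N+1}} = P^{m_1+\dots+m_{N+1}}\prod_{n=1}^{N+1}F_n^{-m_n} = P^{d}\prod_{n=1}^{N+1}F_n^{-m_n} = \prod_{n=1}^{N+1}F_n^{(k_n-k_{n-1})-m_n},
\]
using $\sum_n m_n = d$ and the definition of $P$. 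So the claim reduces to showing that $\prod_{n=1}^{N+1}F_n^{(k_n-k_{n-1}) - m_n}\geq 1$, i.e. that the quantity $E := \sum_{n=1}^{N+1}\big((k_n-k_{n-1})-m_n\big)\log F_n$ is nonnegative. Note that the exponents $(k_n-k_{n-1})-m_n$ sum to $0$ (both $\sum_n(k_n-k_{n-1})$ and $\sum_n m_n$ equal $d$), which is exactly the condition under which such a claim can hold given monotonicity of the $F_n$.

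The key structural input is the monotonicity \eqref{eq:F_q_monotone}, namely $F_n > \alpha_{k_n}F_{n+1} > F_{n+1}$ since $\alpha_{k_n}\geq 1$; in particular $F_1 > F_2 > \dots > F_{N+1}$, so $\log F_n$ is strictly decreasing in $n$. The standard way to exploit this is Abel summation (summation by parts). Set $c_n = (k_n - k_{n-1}) - m_n$ and let $C_j = \sum_{n=1}^{j} c_n$ be the partial sums, so $C_{N+1} = 0$. Then
\[
E = \sum_{n=1}^{N+1} c_n \log F_n = \sum_{j=1}^{N}C_j\big(\log F_j - \log F_{j+1}\big) + C_{N+1}\log F_{N+1} = \sum_{j=1}^{N} C_j\,\log\!\frac{F_j}{F_{j+1}}.
\]
Each factor $\log(F_j/F_{j+1})$ is strictly positive by \eqref{eq:F_q_monotone}, so it suffices to show that every partial sum $C_j = \sum_{n=1}^{j}\big((k_n-k_{n-1})-m_n\big) = k_j - \sum_{n=1}^{j}m_n$ is nonnegative. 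But $\sum_{n=1}^{j}m_n$ counts the number of lines among $l_1,\dots,l_d$ that lie in $\pi_j$ (each such line lies in exactly one layer $n\leq j$), and since $\delta(l_1,\dots,l_d)=1$ these lines have linearly independent directions and all lie in the $k_j$-dimensional space $\pi_j$, whence their number is at most $k_j$. This gives $C_j = k_j - \sum_{n\leq j}m_n \geq 0$, completing the proof. (The hypothesis $m_n \leq k_n$ in the lemma statement is in fact a consequence of — and weaker than — the partial-sum bounds $\sum_{n\leq j}m_n \leq k_j$ that actually get used.)

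The main obstacle, such as it is, is simply identifying that the right bookkeeping quantity is the partial sum $\sum_{n\leq j}m_n$ rather than the individual $m_n$, and recognizing that the linear-independence clause in $\delta(l_1,\dots,l_d)=1$ forces this partial sum to be at most $\dim\pi_j = k_j$; once that geometric observation is in hand, Abel summation against the monotone sequence $(\log F_n)$ does the rest mechanically. It is worth spelling out explicitly why $\sum_{n\leq j}m_n$ lines genuinely lie in $\pi_j$: a line contributing to $m_n$ satisfies $l\subseteq\pi_n\subseteq\pi_j$ when $n\leq j$, using the nesting $\pi_1\subsetneq\dots\subsetneq\pi_N$ of the constructed subspace sequence. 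I would present the argument in the order above: reduce to showing $\prod F_n^{c_n}\geq 1$; apply Abel summation using $C_{N+1}=0$; observe $C_j = k_j - \#\{i : l_i\subseteq\pi_j\}\geq 0$ by linear independence inside $\pi_j$; conclude using $\log(F_j/F_{j+1})>0$ from \eqref{eq:F_q_monotone}.
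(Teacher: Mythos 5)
Your proof is correct, and at its computational core it is the same argument as the paper's: the paper factors out $\rho_1^{k_1}\cdots\rho_{N+1}^{d-k_N}=1$ using \eqref{eq:worstS} and is left with $\prod_n F_n^{(k_n-k_{n-1})-m_n}$, which it bounds below by $1$ by ``repeated use of \eqref{eq:F_q_monotone}'', i.e.\ by successively pushing each factor down onto $F_{N+1}$ --- this is exactly your Abel-summation identity written multiplicatively (and, like your use of logarithms, it implicitly assumes each $F_n>0$, which is already forced by the formula $\rho_n=F_n^{-1}\prod_m F_m^{(k_m-k_{m-1})/d}$). Where you genuinely differ is in the hypothesis you work from: you establish the nonnegativity of the partial sums $C_j=k_j-\sum_{n\le j}m_n$ from the fact that the lines counted by $\sum_{n\le j}m_n$ have linearly independent directions and lie in the $k_j$-dimensional space $\pi_j$, and you correctly observe that the stated hypothesis $m_n\le k_n$ is weaker than this. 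That observation is not cosmetic: the monotone step, however phrased, needs $C_j\ge 0$ for every $j$, and this does not follow from $m_n\le k_n$ alone. For instance, with $d=3$, $k_1=1$, $k_2=2$ and $(m_1,m_2,m_3)=(1,2,0)$ one has $m_n\le k_n$ for all $n$ and $\sum_n m_n=d$, yet $\prod_n F_n^{(k_n-k_{n-1})-m_n}=F_3/F_2<1$ by \eqref{eq:F_q_monotone}; such a configuration simply cannot arise from a joint, because three independent directions cannot all lie in the $2$-plane $\pi_2$ --- which is precisely your cumulative bound. So your version of the hypothesis is the one under which the inequality is actually true and the one needed where the lemma is applied (to $d$-tuples with $\delta(l_1,\dots,l_d)=1$); in that sense your write-up quietly repairs a small imprecision in the lemma's statement, while the proof mechanism itself coincides with the paper's.
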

\begin{proof}
We write
$$\rho_1^{m_1} \cdots \rho_n^{m_n} \cdots  \rho_{N+1}^{m_{N+1}}$$
$$= 
\left(\rho_1^{k_1} \cdots \rho_n^{k_n - k_{n-1}} \cdots \rho_{N+1}^{d-k_N} \right) \left(\rho_1^{m_1 - k_1} \cdots \rho_n^{m_n- (k_n - k_{n-1})} \cdots  \rho_{N+1}^{m_{N+1} - (d - k_N)}\right)$$
$$ = \rho_1^{m_1 - k_1} \cdots \rho_n^{m_n- (k_n - k_{n-1})} \cdots  \rho_{N+1}^{m_{N+1} - (d - k_N)}$$
$$ = F_1^{k_1 - m_1} \cdots F_n^{k_n - k_{n-1} - m_n} \cdots 
F_{N+1}^{d-k_N -m_{N+1}}\geq F_{N+1}^0 =1 $$
by \eqref{eq:worstS} and repeated use of \eqref{eq:F_q_monotone} (noting that each $\alpha_n \geq 1$).
\end{proof}

\subsection{Proof of Lemma~\ref{lemma:concl}}\label{sect:L2pf}
It is here that we finally use the fact that for all subspaces $\pi$ which contain $\pi_{n-1}$ and which are strictly contained in $\pi_n$, inequality \eqref{eq:light_c} holds.

\medskip
\noindent
For ease of notation, fix $n$, let $k_{n-1} =k$ and let $k_n = k+r$. Relabel $\pi_{n-1}$ as $\Pi_0$ and $\pi_n$ as $\Pi_1$. Let $\Delta$ be the set of all lines which are contained in $\Pi_1$ but which are not contained in $\Pi_0$. 

\medskip
\noindent
Inequality \eqref{eq:light_c} of Lemma~\ref{lemma:improved_lightness} now becomes: for all subspaces $\pi$ with $\Pi_0 \subsetneq \pi \subsetneq \Pi_1$
\begin{equation}\label{eq:light_b}
\sum_{l \subseteq  \pi, l \not\subseteq \Pi_{0}} f(l) \leq 4 \alpha_{k_{n-1}} \sum_{l \not\subseteq  \pi, l \subseteq \Pi_1} f(l).
\end{equation}

\medskip
\noindent
We wish to prove
\begin{equation}\label{eq:to_prove}
\left(\sum_{l \in \Delta} f(l)\right)^{r} \leq \beta_n \sum_{l_{1}, \dots , l_{r} \in \Delta}
\delta_{\Pi_{0}, \Pi_1}(l_{1}, \dots , l_{r}) f(l_{1}) \dots f(l_{r}).
\end{equation}

\medskip
\noindent
Let $(l_1, \dots , l_r)$ be an $r$-tuple of lines, each of which lies in $\Delta$. Then, together with $\Pi_0$, they span a subspace of $\Pi_1$ of dimension $j$ for some $j \in \{k+1, \dots, k+r\}$. For $k+1 \leq j \leq k+r$ let
$$\Gamma_j = \{(l_1, \dots , l_r) \in \Delta^r \, : \, {\rm dim \; span} \{\Pi_0, l_1, \dots , l_r\} = j\}.$$
Note that $\delta_{\Pi_{0}, \Pi_1}(l_{1}, \dots , l_{r}) = 1$ if and only if $(l_1, \dots, l_r) \in \Gamma_{k+r}$.

\medskip
\noindent
We expand the left-hand side of \eqref{eq:to_prove} 
as
$$
\sum_{j=k+1}^{k+r} \sum_{(l_1, \dots , l_r) \in \Gamma_j} f(l_1) \dots f(l_r),
$$
and to prove \eqref{eq:to_prove} it suffices (indeed it is equivalent) to show that for $k+1 \leq j < k+r$,
$$ \sum_{(l_1, \dots , l_r) \in \Gamma_j} f(l_1) \dots f(l_r) \lesssim \sum_{(l_1, \dots , l_r) \in \Gamma_{k+r}} f(l_1) \dots f(l_r), 
$$
and this in turn follows if we can show that for
$k+1 \leq j <k+r$
$$ \sum_{(l_1, \dots , l_r) \in \Gamma_j} f(l_1) \dots f(l_r) \lesssim \sum_{(l_1, \dots , l_r) \in \Gamma_{j+1}} f(l_1) \dots f(l_r). 
$$

\medskip
\noindent
If $(l_1, \dots ,l_r) \in \Gamma_j$, then for some $(j-k)$-tuple -- which is without loss of generality $(l_1, \dots , l_{j-k})$ -- we have that $\{\Pi_0, l_1, \dots , l_{j-k}\}$ spans a $j$-plane $H_{\{l_1, \dots , l_{j-k}\}}$ satisfying $\Pi_0 \subsetneq H_{l_1, \dots , l_{j-k}} \subsetneq \Pi_1$.
Therefore
$$ \sum_{(l_1, \dots , l_r) \in \Gamma_j} f(l_1) \dots f(l_r)$$
$$\leq {r \choose j-k} \sum_{\{e(l_1), \dots e(l_{j-k})\} 
{\rm{ lin. \, indep.}}} f(l_1) \dots f(l_{j-k})
\sum_{l_{j-k+1}, \dots , l_r \subseteq H_{\{l_1, \dots , l_{j-k}\}}} f(l_{j-k+1}) \dots f(l_r)$$
$$ = {r \choose j-k} \sum_{\{e(l_1), \dots e(l_{j-k})\} 
{\rm{ lin. \, indep.}}} f(l_1) \dots f(l_{j-k})
\left(\sum_{l \subseteq H_{\{l_1, \dots , l_{j-k}\}}} f(l)\right)^{r-(j-k)}.$$
We use \eqref{eq:light_b} -- which is applicable since $\Pi_0 \subsetneq H_{l_1, \dots , l_{j-k}} \subsetneq \Pi_1$ -- to estimate the bracketed expression here by
$$4 \alpha_{k} \left(\sum_{l \subseteq H_{\{l_1, \dots , l_{j-k}\}}} f(l)\right)^{r-(j-k)-1}\left(\sum_{l \not\subseteq H_{\{l_1, \dots , l_{j-k}\}}, \, l \subseteq \Pi_1} f(l)\right).$$
This shows that 
$$ \sum_{(l_1, \dots , l_r) \in \Gamma_j} f(l_1) \dots f(l_r) \lesssim \sum_{(l_1, \dots , l_r) \in \Gamma_{j+1}} f(l_1) \dots f(l_r),
$$
as needed.

\medskip
\noindent
This completes the proof of Lemma~\ref{lemma:concl} and Theorem~\ref{thm:essence}.

\section{Proof of Theorem~\ref{thm:joints}}\label{sect:joints_pf}
\noindent
In the case that the field $\mathbb{F}$ is finite we can simply let $X= \mathbb{F}^d$ with counting measure, $Y = \mathcal{L}^\ast$ (the set of all lines in $\mathbb{F}^d$) with counting measure, $\mathcal{Y} = l^1(Y)$ and let $T: \mathcal{Y}^d \to \mathcal{M}(X)$ be given as above by
$$ T(f_1, \dots , f_d)(x) 
= \sum_{l_j \in \mathcal{L}^\ast} \delta(x,l_1, \dots , l_d) f_1(l_1) \dots f_d(l_d).$$
Then $T$ saturates $X$, and by Zhang's theorem we have
$$\|T(f, \dots , f)^{1/d}\|_{L^{d/(d-1)}(X)} \lesssim \|f\|_{L^1(Y)}.$$
We have shown in Section~\ref{sect:verif} that $T$ satisfies the auxiliary structural hypothesis given by \eqref{eq:seven_sym} and \eqref{eq:eight_sym}. Thus, by the discrete and finite version of Theorem~\ref{thm:main_sym}, there exists $g(x,l) \geq 0$ (the kernel of the operator $R$) defined on $X \times \mathcal{L}^\ast$ such that 
\begin{equation*}
M(x)^d\delta(x, l_1, \dots , l_d) \leq g(x, l_1) \dots g(x, l_d)
\end{equation*}
and
\begin{equation*}
\sum_{x \in l} g(x,l) \lesssim \left( \sum_{x \in \mathbb{F}^d}M(x)^d\right)^{1/d}
\end{equation*}
uniformly in $l \in \mathcal{L}^\ast$, where the implicit constant depends only on $d$. This completes the argument when $\mathbb{F}$ is finite.

\medskip
\noindent
For the general case when the field is infinite, we will first need a straightforward linear-algebraic lemma:
\begin{lemma}
Let $V$ be a finite-dimensional vector space over a field $\mathbb{F}$. Let $A \subseteq V$ be a finite subset of $V$. Then there is a finite set $B$ satisfying $A \subseteq B \subseteq V$ such that for every subset $C \subseteq B$ of linearly independent vectors, there is a subset $D \subseteq B \setminus C$ such that $C \cup D$ forms a basis for $V$. 
\end{lemma}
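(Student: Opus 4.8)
The plan is to reduce the statement to a much weaker‑looking spanning condition, and then to verify that condition, treating finite and infinite $\mathbb{F}$ separately. The key reduction is: it suffices to produce a finite set $B$ with $A \subseteq B \subseteq V$ such that for every linearly independent $C \subseteq B$ which is \emph{not} a basis of $V$, the set $B \setminus C$ spans $V$. Indeed, given such a $B$ and a linearly independent $C \subseteq B$, write $W = \mathrm{span}(C)$ with $\dim W = k$; if $k = n := \dim V$ then $C$ is already a basis and $D = \emptyset$ works, while if $k < n$ then the images in $V/W$ of the elements of $B \setminus C$ span $V/W$, so we may pick $d_1, \dots, d_{n-k} \in B \setminus C$ whose images form a basis of $V/W$. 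These $d_i$ are linearly independent modulo $W$, whence $C \cup \{d_1, \dots, d_{n-k}\}$ is a linearly independent set of $n$ vectors, i.e.\ a basis of $V$, and $\{d_1, \dots, d_{n-k}\} \subseteq B \setminus C$, as required.

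If $\mathbb{F}$ is finite then $V$ is finite, and I would simply take $B = V$. To check the spanning condition, let $C = \{c_1, \dots, c_k\}$ be linearly independent and not a basis, and extend it to a basis $\{c_1, \dots, c_k, d_1, \dots, d_{n-k}\}$, noting $n - k \ge 1$. A direct verification shows that $\{c_1 + d_1, \dots, c_k + d_1, d_1, d_2, \dots, d_{n-k}\}$ is again a basis of $V$ and is disjoint from $C$; hence it lies in $V \setminus C$, so $V \setminus C$ spans $V$.

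If $\mathbb{F}$ is infinite, I would fix an isomorphism $V \cong \mathbb{F}^n$ and let $A' \subseteq V$ be the set of $2n$ vectors corresponding to the points $(1, t, t^2, \dots, t^{n-1})$ for $2n$ distinct values $t \in \mathbb{F}$; by the Vandermonde determinant, any $n$ of these vectors are linearly independent. Set $B = A \cup A'$, which is finite and contains $A$. If $C \subseteq B$ is linearly independent and not a basis then $|C| \le n - 1$, so $|A' \setminus C| \ge n + 1$, and any $n$ of the vectors in $A' \setminus C$ are linearly independent and hence span $V$; therefore $B \setminus C \supseteq A' \setminus C$ spans $V$.

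The substantive step is the reduction in the first paragraph; once it is in place, the rest is routine linear algebra. The only mild subtlety I anticipate is that the ``general position'' set $A'$ used for infinite fields need not exist over small finite fields (for instance, $\mathbb{F}_2^3$ does not contain five vectors every three of which are linearly independent), which is exactly why the finite case is instead disposed of by the trivial choice $B = V$.
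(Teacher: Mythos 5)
Your proof is correct, but it takes a genuinely different route from the paper's. The paper needs no case distinction on the field: it sets $U = \mathrm{span}\,A$, chooses a linearly independent set $F \subseteq V \setminus U$ completing a basis of $U$ to a basis of $V$, and takes $B = A \cup F$; for a linearly independent $C \subseteq B$ it then extends $C \cap A$ to a basis of $U$ using vectors of $A$ and adjoins the rest of $F$, so the whole lemma is a short splitting argument with $|B| \le |A| + \dim V$. You instead reduce the lemma to the condition that $B \setminus C$ spans $V$ for every linearly independent non-basis $C \subseteq B$ (your reduction via $V/\mathrm{span}(C)$ is clean and correct), and verify that condition by cases: $B = V$ when $\mathbb{F}$ is finite, with the nice disjoint-basis trick $\{c_1 + d_1, \dots, c_k + d_1, d_1, \dots, d_{n-k}\}$, and $B = A \cup A'$ with $A'$ a moment-curve (Vandermonde) family of $2n$ vectors in general position when $\mathbb{F}$ is infinite. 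Your construction buys something slightly stronger in the infinite case --- $B \setminus C$ spans $V$ for \emph{every} subset $C$ with $|C| \le n-1$, independent or not --- at the price of the case split and of a potentially much larger $B$ in the finite case (harmless here, since only finiteness of $B$ is used). One microscopic point: when $n = 1$ all $2n$ parameter values give the same vector $(1)$, so $|A'| = 1$ rather than $2n$; the argument and the lemma are trivial there, but you should either note this or assume $n \ge 2$ when invoking the cardinality count $|A' \setminus C| \ge n+1$.
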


\begin{proof}
Let $U = {\rm span}\, A$ and let $F$ be a linearly independent set of vectors in $V \setminus U$ such that ${\rm span} \, (A \cup F) =V$. Let $B = A \cup F$. If $C \subseteq B$ is a linearly independent set of vectors, then $C \cap A$ can be extended to a basis of $U$ by adding vectors from $A \setminus C$, and $C \cap F$ can be extended to a basis of ${\rm span} \, F$ by adding vectors from $F$. Thus $C$ can be extended to a basis of $V$ using vectors from $B$.  
\end{proof}

\medskip
\noindent
In order to simplify notation we assume from now on that $\left( \sum_{x \in \mathbb{F}^d}M(x)^d\right)^{1/d} = \|M\|_d=1$.

\medskip
\noindent
It suffices to define $g: {\rm supp} \, M \times \mathcal{L}^\ast \to \mathbb{R}_+$, such that for all $x \in {\rm supp} \, M$, whenever $l_1, \dots , l_d \in \mathcal{L}^\ast$ form a joint at $x$ we have 
\begin{equation*}
M(x)^d \leq g(x, l_1) \dots g(x, l_d)
\end{equation*}
and
\begin{equation*}
\sum_{x \in l} g(x,l) \lesssim 1
\end{equation*}
uniformly in $l \in \mathcal{L}^\ast$. We will define $g$ in a piecemeal fashion: we first identify a finite family $\mathcal{L}$ of lines, and define $g$ on ${\rm supp} \, M \times \mathcal{L}$; and then we define $g$ on ${\rm supp} \, M \times (\mathcal{L}^\ast \setminus \mathcal{L})$. Finally we check that our $g$ satisfies the desired conclusions.

\medskip
\noindent
We identify the finite family of lines $\mathcal{L}$. 
Included in $\mathcal{L}$ is the set $\mathcal{L}'$ of all lines containing two or more points of ${\rm supp} \, M$. For each point $x \in {\rm supp} \, M$ we consider the set $\mathcal{L}'_x$ of lines in $\mathcal{L}'$ which contain $x$. By the linear-algebraic lemma, there is a finite set $\mathcal{L}''_x$ of lines containing $x$, which contains $\mathcal{L}'_x$, such that every subset of $\mathcal{L}''_x$ whose members have linearly independent directions 
can be augmented by lines from $\mathcal{L}''_x$
to form a joint at $x$. We include all these sets $\mathcal{L}''_x$, for $x \in {\rm supp} \, M$, in our set $\mathcal{L}$. Clearly $\mathcal{L}$ is finite, and any line not in $\mathcal{L}$ contains at most one point of ${\rm supp} \, M$.

\medskip
\noindent
We now define $g$ on $J \times \mathcal{L}$ where $J$ is the set of joints of $\mathcal{L}$. This in particular defines $g$ on ${\rm supp} \, M \times \mathcal{L}$. Indeed, we simply apply the discrete and finite version of Theorem~\ref{thm:main_sym} to $\mathcal{L}$ and $J$ (observing that the saturation and auxiliary hypotheses hold) to obtain $g(x,l)$ for $x \in J$ and $l\in \mathcal{L}$ which is such that for all $x \in {\rm supp} \, M$, whenever $l_1, \dots , l_d \in \mathcal{L}$ form a joint at $x$ we have 
\begin{equation}\label{eq:zero}
M(x)^d \leq g(x, l_1) \dots g(x, l_d)
\end{equation}
and, for all $l \in \mathcal{L}$
\begin{equation}\label{eq:onex}
\sum_{x \in l \cap J} g(x,l) \leq C_d 
\end{equation}
Notice that this last inequality implies that for all $l \in \mathcal{L}$ and all $x_0 \in J$ we have $g(x_0,l) \leq C_d.$ 

\medskip
\noindent
Next we define $g$ on ${\rm supp} \, M \times (\mathcal{L}^\ast \setminus \mathcal{L})$. As we noted above, for any $l \in \mathcal{L}^\ast \setminus \mathcal{L}$, $l$ contains at most one point of ${\rm supp} \, M$. If $l$ contains no point of ${\rm supp} \, M$ we define $g(x,l) = 0$ for all $x \in {\rm supp} \, M$. If $l \cap {\rm supp} \, M = \{x_0\}$ we define 
$g(x_0,l) = C_d
$ and $g(x,l) = 0$ for all $x \in {\rm supp} \, M \setminus \{x_0\}$. Note that for all $l \in 
\mathcal{L}^\ast \setminus \mathcal{L}$, 
\begin{equation}\label{eq:twox}
\sum_{x \in l \cap \, {\rm supp} \, M}g(x,l) \leq C_d 
\end{equation}
since there can be at most one term in the sum on the left-hand side.

\medskip
\noindent
Finally we verify our desired conclusions. The second conclusion is immediate from \eqref{eq:onex} and \eqref{eq:twox}. For the first conclusion, take an $x_0 \in {\rm supp} \, M$ and 
let $l_1, \dots , l_d$ be a $d$-tuple of lines forming a joint at $x_0$, some of which may be in $\mathcal{L}$ and some of which may be in $\mathcal{L}^\ast \setminus \mathcal{L}$. If all the lines lie in $\mathcal{L}$, \eqref{eq:zero} directly gives what we need. 
Otherwise, for some $0 \leq k \leq d-1$, there are $k$ lines $l_1, \dots, l_k$ in $\mathcal{L}$ and $d-k$ lines $l_{k+1}, \dots ,l_d$ in $\mathcal{L}^\ast \setminus \mathcal{L}$. By our construction of $\mathcal{L}$ we may augment $  l_1, \dots, l_k$ with lines $l'_{k+1}, \dots ,l'_d$ in $\mathcal{L}$ to form a joint at $x_0$. And we have
$$ g(x_0, l_j) \geq g(x_0, l'_j) \mbox{ for } k + 1 \leq j \leq d$$
because each instance of the left-hand side is exactly $C_d$ 
while each instance of the right-hand side is at most $C_d$. Therefore,
$$ g(x_0, l_1) \dots g(x_0, l_d) \geq g(x_0, l_1) \dots g(x_0, l_k)g(x_0, l'_{k+1}) \dots g(x_0, l'_d)\geq M(x_0)^d$$
since $l_1, \dots , l_k, l'_{k+1}, \dots , l'_d$
form a joint at $x_0$. This completes the verification of the first conclusion.

\section{Concluding remarks}\label{sect:concl}
\noindent
The verification of the auxiliary hypothesis in the case of joints formed by higher-dimensional planes
appears to require more work, and will be addressed elsewhere. Nevertheless, once this is achieved, we will have an independent proof of results analogous to Theorem~\ref{thm:main} which have already been obtained directly by the second author in \cite{Tang}.

\medskip
\noindent
Verification of the auxiliary hypothesis in the context of the Euclidean multilinear Kakeya problem is another matter which we are currently addressing.

\medskip
\noindent
The details of the proofs of the full versions of Theorems~\ref{thm:main} and \ref{thm:main_sym} will also appear elsewhere. As remarked previously, it is not clear to what extent the auxiliary hypotheses are necessary for the conclusions of Theorems~\ref{thm:main} and \ref{thm:main_sym} to hold. 

\medskip
\noindent
Finally, we observe that we can remove the hypothesis in Theorem~\ref{thm:joints} that $M$ be finitely supported if, instead of its discrete and finite version, we use the full force of Theorem~\ref{thm:main_sym}:
\begin{theorem}\label{thm:joints_upgrade}
Let $\mathbb{F}$ be an arbitrary field and let $\mathcal{L}^\ast$ be the family of all lines in $\mathbb{F}^d$. For every $M:\mathbb{F}^d \to \mathbb{R}_+$ there is a nonnegative function $g(x, l)$ defined on $\mathbb{F}^d \times \mathcal{L}^\ast$ such that for all $x \in \mathbb{F}^d$ and $l \in \mathcal{L}^\ast$ we have
$$M(x)^d \delta(x, l_1, \dots , l_d) \leq g(x, l_1) \dots g(x, l_d)$$
and for all $l \in \mathcal{L}^\ast$,
$$ \sum_{x \in l} g(x, l) \lesssim \left( \sum_{x \in \mathbb{F}^d}M(x)^d\right)^{1/d}.$$
\end{theorem}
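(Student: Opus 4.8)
\textbf{Proof proposal for Theorem~\ref{thm:joints_upgrade}.}
The plan is to re-run the infinite-field argument used for Theorem~\ref{thm:joints}, but invoking the \emph{full} Theorem~\ref{thm:main_sym} (valid over $\sigma$-finite measure spaces) in place of its finite discrete version, so that the auxiliary line family and the associated joint set may be countably infinite. First I would dispose of the case $\sum_x M(x)^d = \infty$, in which the conclusion is vacuous (e.g.\ $g(x,l)=M(x)\mathbf{1}[x\in l]$ works), and otherwise normalise so that $\|M\|_d=1$; then $\mathrm{supp}\,M$ is countable. As in Theorem~\ref{thm:joints} it suffices to build $g$ on $\mathrm{supp}\,M\times\mathcal{L}^\ast$ (putting $g\equiv 0$ elsewhere), and I would do this piecemeal over a family $\mathcal{L}\subseteq\mathcal{L}^\ast$ and over its complement.

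For $\mathcal{L}$ I would take the family $\mathcal{L}'$ of all lines meeting $\mathrm{supp}\,M$ in at least two points --- now countable rather than finite --- together with, for each $x\in\mathrm{supp}\,M$, the finite augmenting family $\mathcal{L}''_x$ furnished by the linear-algebraic lemma. Then $\mathcal{L}$ is a countable union of finite sets together with the countable set $\mathcal{L}'$, hence countable; every line not in $\mathcal{L}$ meets $\mathrm{supp}\,M$ in at most one point; the joint set $J$ of $\mathcal{L}$ is also countable, since a joint of $\mathcal{L}$ is the unique intersection point of some pair of lines of $\mathcal{L}$ with distinct directions and there are only countably many such pairs; and $\mathrm{supp}\,M\subseteq J$ by the defining property of the $\mathcal{L}''_x$.

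Now $(J,\text{counting})$ and $(\mathcal{L},\text{counting})$ are $\sigma$-finite, the multijoints operator $T(f_1,\dots,f_d)(x)=\sum_{l_j\in\mathcal{L}}\delta(x,l_1,\dots,l_d)f_1(l_1)\cdots f_d(l_d)$ on $l^1(\mathcal{L})^d$ saturates $J$, it satisfies $\|T(f,\dots,f)^{1/d}\|_{L^{d/(d-1)}(J)}\lesssim_d\|f\|_{l^1(\mathcal{L})}$ by Zhang's theorem (first for finitely supported $f$, then by monotone convergence), and it satisfies the symmetric auxiliary hypothesis \eqref{eq:seven_sym}--\eqref{eq:eight_sym} with $B\lesssim_d 1$ by Theorem~\ref{thm:essence} (applied pointwise in $x$ after translating $x$ to $0$, and restricting the weights it produces to $\mathcal{L}$). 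Applying the full Theorem~\ref{thm:main_sym} with $M$ restricted to $J$ then yields a kernel $g(x,l)$ on $J\times\mathcal{L}$ --- which we may take to vanish whenever $x\notin l$, replacing $g(x,l)$ by $g(x,l)\mathbf{1}[x\in l]$ if necessary, since this violates neither conclusion --- such that $M(x)^d\le g(x,l_1)\cdots g(x,l_d)$ whenever $l_1,\dots,l_d\in\mathcal{L}$ form a joint at $x\in\mathrm{supp}\,M$, and $\sum_{x\in l}g(x,l)\le C_d$ for every $l\in\mathcal{L}$; in particular $g(x_0,l)\le C_d$ for each $x_0\in l\cap J$.

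Finally I would extend $g$ to $\mathrm{supp}\,M\times(\mathcal{L}^\ast\setminus\mathcal{L})$ exactly as in Theorem~\ref{thm:joints}: if $l\notin\mathcal{L}$ misses $\mathrm{supp}\,M$ set $g(\cdot,l)\equiv 0$, and if $l\cap\mathrm{supp}\,M=\{x_0\}$ set $g(x_0,l)=C_d$ and $g(x,l)=0$ otherwise. The column bound $\sum_{x\in l}g(x,l)\le C_d$ then holds for every $l\in\mathcal{L}^\ast$, giving the second conclusion; and for the first conclusion, given $x_0\in\mathrm{supp}\,M$ and lines $l_1,\dots,l_d$ forming a joint at $x_0$ with, say, $l_1,\dots,l_k\in\mathcal{L}$ and $l_{k+1},\dots,l_d\notin\mathcal{L}$, the construction of $\mathcal{L}''_{x_0}$ lets us pick $l'_{k+1},\dots,l'_d\in\mathcal{L}$ so that $l_1,\dots,l_k,l'_{k+1},\dots,l'_d$ still form a joint at $x_0$, and since $g(x_0,l_j)=C_d\ge g(x_0,l'_j)$ for $j>k$ we get $g(x_0,l_1)\cdots g(x_0,l_d)\ge g(x_0,l_1)\cdots g(x_0,l_k)\,g(x_0,l'_{k+1})\cdots g(x_0,l'_d)\ge M(x_0)^d$. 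The only substantive new point over Theorem~\ref{thm:joints} is the countability (hence $\sigma$-finiteness) of $\mathcal{L}$ and $J$; accordingly the one real obstacle is the appeal to the \emph{full} Theorem~\ref{thm:main_sym}, whose general proof --- via a lopsided minimax theorem together with the attendant compactness arguments --- is deferred to a subsequent paper.
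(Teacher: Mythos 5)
Your argument is correct, but it takes a genuinely different route from the paper's. The paper proves Theorem~\ref{thm:joints_upgrade} in one step: it takes $X = \operatorname{supp} M$ (countable, hence $\sigma$-finite) with counting measure and $\mathcal{Y} = l^1(\mathcal{L}^\ast)$ over the family of \emph{all} lines, notes that the auxiliary hypothesis \eqref{eq:seven_sym}--\eqref{eq:eight_sym} for this $T$ was already verified in Section~\ref{sect:verif} (Theorem~\ref{thm:essence}) with the dense subspace of finitely supported functions, and applies the full Theorem~\ref{thm:main_sym} directly; no auxiliary family $\mathcal{L}$, no linear-algebraic lemma, and no piecemeal extension are needed, since those devices existed in Section~\ref{sect:joints_pf} only to reduce to the \emph{finite} discrete version of Theorem~\ref{thm:main_sym}. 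You instead re-run the Theorem~\ref{thm:joints} machinery with ``finite'' upgraded to ``countable'' and invoke the full Theorem~\ref{thm:main_sym} only for a countable point set $J$ and a countable line family $\mathcal{L}$. What your route buys is that the deferred general theorem is needed only in the separable case ($l^1$ of a countable set, countable $X$), rather than for $l^1$ over the possibly uncountable family $\mathcal{L}^\ast$; what it costs is the extra combinatorial scaffolding (the augmenting families, the two-stage definition of $g$, and the final augmentation argument), all of which the paper's direct application renders unnecessary.

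One small inaccuracy to repair: you describe $\mathcal{L}''_x$ as the ``finite augmenting family furnished by the linear-algebraic lemma,'' but the lemma is stated for a \emph{finite} set $A$, and in your setting $\mathcal{L}'_x$ (the lines of $\mathcal{L}'$ through $x$) may well be countably infinite, so $\mathcal{L}''_x$ cannot in general be taken finite, and your count of $\mathcal{L}$ as ``a countable union of finite sets'' is not literally right. This is harmless: the lemma's proof works verbatim for countable $A$ (only finitely many new directions are ever added, since the ambient space is finite-dimensional), so each $\mathcal{L}''_x$ is countable and $\mathcal{L}$ remains countable, which is all your argument uses. With that adjustment, and granting the full Theorem~\ref{thm:main_sym} (deferred in the paper as well), your proof goes through.
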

\begin{proof}
If $\sum_{x \in \mathbb{F}^d} M(x)^d = \infty$ there is nothing to prove. So we may assume 
that $\sum_{x \in \mathbb{F}^d} M(x)^d < \infty$ and that therefore $M$ is countably supported. Take $X$ to be the support of $M$ with counting measure. Let $Y = \mathcal{L}^\ast$ with counting measure, $\mathcal{Y} = l^1(Y)$ and let $T: \mathcal{Y}^d \to \mathcal{M}(X)$ be given as above by
$$ T(f_1, \dots , f_d)(x) 
= \sum_{l_j \in \mathcal{L}^\ast} \delta(x,l_1, \dots , l_d) f_1(l_1) \dots f_d(l_d).$$ 
Then $T$ saturates $X$, and by Zhang's theorem we have
$$\|T(f, \dots , f)^{1/d}\|_{L^{d/(d-1)}(X)} \lesssim \|f\|_{L^1(Y)}.$$
We note that $T$ satisfies the auxiliary structural hypothesis given by \eqref{eq:seven_sym} and \eqref{eq:eight_sym}, where the dense subspace of $l^1(\mathcal{L}^\ast)$ is taken to be the space of finitely supported functions defined on $\mathcal{L}^\ast$. Thus, by Theorem~\ref{thm:main_sym}, there exists $g(x,l) \geq 0$ defined on $X \times \mathcal{L}^\ast$ such that for all $x \in X$ and $l_1, \dots , l_d \in \mathcal{L}^\ast$,
\begin{equation*}
M(x)^d\delta(x, l_1, \dots , l_d) \leq g(x, l_1) \dots g(x, l_d)
\end{equation*}
and
\begin{equation*}
\sum_{x \in l} g(x,l) \lesssim \left( \sum_{x \in \mathbb{F}^d}M(x)^d\right)^{1/d}
\end{equation*}
uniformly in $l \in \mathcal{L}^\ast$. For $x \notin X$ and arbitrary $l \in \mathcal{L}^\ast$ we can take $g(x,l) = 0$.
\end{proof}

\bibliographystyle{plain}

\bibliography{bbb}

\end{document}